\documentclass[11pt, a4paper, final]{amsart} 

%
%
\usepackage[left = 2.3cm, right = 2.30cm, top = 2.5cm, bottom = 2.5cm]{geometry}
\usepackage[utf8]{inputenc}
\usepackage[english]{babel}
\usepackage[T1]{fontenc}
\usepackage{lmodern}
\usepackage{amsmath, amsfonts, amssymb,  mathabx, mathrsfs, mathtools, dsfont}
\usepackage{graphicx}
\usepackage{subcaption}
\usepackage{hyperref}
\hypersetup{%
colorlinks=true,
linkcolor= red,	
citecolor= red,	
}
\usepackage{aliascnt}
\usepackage[capitalize]{cleveref}
\usepackage{enumitem}

%
%
\newtheorem{theorem}{Theorem}[section]

\newaliascnt{lemma}{theorem}
\newtheorem{lemma}[lemma]{Lemma}
\aliascntresetthe{lemma}

\newaliascnt{proposition}{theorem}

\aliascntresetthe{proposition}

\newtheorem{definition}{Definition}[section]
\newaliascnt{algo}{definition}

\aliascntresetthe{algo}
\numberwithin{equation}{section}
%
%
%
\newcommand{\tg}{\tilde{g}}
\newcommand{\tu}{\tilde{u}}
\newcommand{\tv}{\tilde{v}}
\newcommand{\tc}{\tilde{c}}
\newcommand{\la}{\langle}
\newcommand{\ra}{\rangle}
\newcommand{\Gr}{\boldsymbol{\mathrm{Gr}}}
\newcommand{\dd}{\,\mathrm{d}}
\newcommand{\mI}{\mathcal{I}}
\newcommand{\mC}{\mathcal{C}}
\newcommand{\RR}{\mathbb{R}}
\newcommand{\NN}{\mathbb{N}}
\newcommand{\bu}{\bar{u}}
%
%

%
%
%
%
\title{Reduced dynamics for models of pattern formation}
\date{\today}
\author[E.\ Hausenblas]{Erika Hausenblas}
\address{Chair of Applied Mathematics, Montanuniversität Leoben, Austria}
\email{erika.hausenblas@unileoben.ac.at}
\author[T.A.\ Randrianasolo]{Tsiry Avisoa Randrianasolo}
\address{Faculty of Mathematics, Bielefeld University, Germany}
\email{trandria@math.uni-bielefeld.de}
\begin{document}
%
%
%
\begin{abstract}
	The goal of this work is to analyze the long-term behavior of reaction-diffusion systems arising in two-species chemical models and to identify finite sets of modes that determine their asymptotic dynamics. The models considered include, as particular cases the Gray--Scott and the Glycolysis models. These systems are described by coupled reaction-diffusion equations and admit a finite-dimensional representation based on a limited number of spatial Fourier modes that capture their essential reduced dynamics. The concept of determining modes, introduced in this context, is closely related to other approaches that seek finite-dimensional representations of infinite-dimensional dynamics, such as the Proper Orthogonal Decomposition and the construction of Approximate Inertial Manifolds. We prove that the dynamics of the system can be completely characterized by a finite number of low modes, since all higher modes are asymptotically determined by them, thus providing an analytical foundation for reduced dynamics in models of pattern formation.
\end{abstract}
%
%
\subjclass{35Q92, 35B40, 35K57, 65N30}
%
%
\keywords{Reduced Dynamics, Galerkin Approximation, Asymptotic Analysis, Reaction-diffusion Equation, Pattern Formation, Determining Modes, Data Assimilation}

\maketitle

%
%
\section{Introduction}\label{sec:intro}
Pattern formation arises from the interactions among different components, potentially influenced by their environment. Alan Turing, a cryptographer and pioneer in computer science, developed algorithms to describe complex patterns emerging from simple inputs and random fluctuations. In his seminal 1952 paper \cite{turing}, Turing proposed that the interaction between two biochemical substances with different diffusion rates could generate patterns. His work addressed a key question in morphogenesis: how a single egg can develop into a complex organism. The mathematical model describes an activator protein that stimulates both its own production and the production of an inhibitory protein whose only function is to suppress the activator. It has been observed that a stable homogeneous pattern can become unstable if the inhibitor diffuses faster than the activator. The interaction between the concentrations of these proteins leads to pattern formation, with their spatiotemporal evolution governed by coupled reaction-diffusion systems, known as the activator-inhibitor model. This phenomenon is referred to as diffusion-driven instability, or {\sl Turing instability}. The fundamental phenomenon in activator-inhibitor systems is that a small deviation from spatial homogeneity can trigger strong positive feedback, amplifying the deviation further. Nonlinearities in the local dynamics, such as those introduced by the inhibitor concentration, can then saturate the Turing instability, resulting in a stable and spatially inhomogeneous pattern. For a more detailed discussion, refer to the recent works by Upadhyay and Iyengar \cite{UI_pattern}, Wei and Winter \cite{wei_pattern}, Keener \cite{keener}, and Perthame \cite{perthame}.

When analyzing these patterns in spatial Fourier modes, it is observed that typically only a finite number of modes play significant roles, while higher modes can often be neglected due to their dissipation. This raises the following questions: how many modes are necessary to effectively analyze and characterize the system’s dynamical behavior? Is it essential to approximate the solution with high fidelity, or is it sufficient to model the system using only a few modes?

The concept of determining modes becomes particularly relevant in this context. Originally introduced in the study of fluid dynamics, the notion of determining modes is crucial for understanding complex phenomena like turbulence, which is characterized by chaotic and unpredictable fluid motion. Identifying and analyzing the underlying structures, or modes, within turbulent flows is essential for accurately predicting the behavior of fluid systems.
Determining modes were introduced to identify the parameters that control turbulent flow; see, for example, the monograph by Foia{\c{s}} \cite[Chapter III]{foias2001navier} and the articles by Jones and Titi \cite{jones1993upper}; and Foia{\c{s}} and Temam \cite{foias1984determination}. Modes are said to be ``determining'' if the high modes are asymptotically controlled by these lower modes. By identifying these determining modes, one can characterize the dominant patterns or structures within the flow. In the long run, the entire system can be described by a finite number of parameters, namely the coefficients of the determining modes.

The concept of determining modes has been successfully applied in numerical simulations to reduce computational time. It is also used in data assimilation \cite{olson2003determining,dataass1} and control \cite{control01}. In the context of reaction-diffusion equations, determining modes have been discussed in \cite{det_reac_diff}, where the systems considered are of a monotone type.

Let $ D \coloneqq [0,1]\times[0,1]$.
In this paper, we are interested in the behavior in the limit $t\to\infty$ of a two-species model of the form
\begin{equation} \label{eq:gs}
\left\{
\begin{aligned}
	\partial_t{u}(t) &= d_1\Delta u(t) +a_1u(t) +b_1v(t) -\gamma u(t)v^2(t) + g_1(t), &\mbox{ in } D,
	\\
	\partial_t{v}(t) &= d_2\Delta v(t) +a_2u(t)+ b_2v(t) + \gamma u(t)v^2(t) + g_2(t), &\mbox{ in } D,
	\\
	u(0) &= u_0\ge 0,\;v(0) = v_0\ge 0, &\mbox{ in } D,
\end{aligned}
\right.
\end{equation}
where for $j=1,2$,
$d_j,a_j,b_j,\gamma$ are constants and  $g_j$ are time-dependent maps to be specified later.
We will consider the problem associated with zero Neumann boundary conditions  on $\partial D$.

Regarding the solution theory, we can refer to \cite{pierre2010global}, where the author discusses questions related to  global solutions to a class of reaction-diffusion systems that satisfies two major properties: the quasi-positivity (P) and the mass-control (M). With these two properties valid, and for certain parameters, it is possible to show the global existence of the classical solution. Moreover, the solution remains positive for all time. That applies, for instance, to the Brusselator model \cite{glansdorff1971structure}, which is a particular case of \eqref{eq:gs}, where
$ a_1 = a_2= 0$; $b_1>0, b_2=-(b_1 + 1)$; $\gamma=1$; $g_1 = 0, g_2>0$.
The Brusselator satisfies the properties (P) + (M), and it was shown, see \cite[Proposition 2]{hollis1987global}, \cite[Theorem 1, Page 140]{rothe2006global}, that it has a unique global classical solution. This result applies also to classical reaction-diffusion systems such as the Gray--Scott, and Glycolysis models \cite{ashkenazi1978spatial,segel1980mathematical}.

The system \eqref{eq:gs} fits also within the settings considered by McGough and Riley in \cite{mcgough2005a}, where they develop a general class of reaction-diffusion problems characterized by telescoping nonlinearities.
The term ``telescoping nonlinearities'' refers to the structure in which nonlinear interaction terms cancel or combine across equations, allowing global bounds (such as mass control) to be derived from the sum of the system components.

This property can also be understood through stoichiometric bookkeeping of the chemical reaction. In a spatially distributed reactor model, each species $i$ satisfies an equation of the form
\begin{equation*}
	\partial_tu_i = d_i\Delta u_i +  \sum_r \nu_{i,r}R_{r}(u) + F(u_i^{\mbox{in}} - u_i),
\end{equation*}
where $\nu_{i,r}$ denotes the stoichiometric coefficient of species $i$ in reaction $r$, and $R_{r}(u)$ is the corresponding reaction rate. The term  $F(u_i^{ \mbox{in}} - u_i)$ represents inflow-outflow effects
at dilution rate $F$, where $u_i^{ \mbox{in}}$ is the concentration of species $i$ in the inflow, and the term accounts for both feeding of fresh material and removal of existing material.

This representation reflects the underlying stoichiometric structure of the chemical reaction. A key property is that, for each reaction $r$, the coefficients satisfy a balance relation of the form $\sum_i \nu_{i,r} = 0$, or more generally $\sum_i \alpha_i\nu_{i,r} = 0$, corresponding to conserved quantities of the system. As a consequence, when summing the equations over all species, the reaction terms cancel.

This cancellation mechanism is precisely what is referred to as telescoping nonlinearities: nonlinear interaction terms appear with opposite signs across the system due to the stoichiometric structure, and therefore vanish in the total balance,
\begin{align*}
	\partial_t \big(\sum_i u_i\big) 
	&= \big(\sum_id_i\Delta u_i\big) 
	+  \sum_r \big(\sum_i\nu_{i,r}\big)R_{r}(u) 
	+  F\big(\sum_iu_i^{ \mbox{in}} - \sum_iu_i\big)
	\\
	&= \big(\sum_id_i\Delta u_i\big)   
	+  F\big(\sum_iu_i^{ \mbox{in}} - \sum_iu_i\big).
\end{align*}

For example, the Gray--Scott model has two reactions ($r = 2$). The first reaction of the form 
$
U + 2V\rightarrow 3V
$
has a rate $R_1(u,v) =k_1 uv^2$. That's a consequence of the law of mass action, which for a reaction of the form
$
aU + bV\rightarrow W
$
gives a reaction rate $ku^av^b$.	
Thus, the second reaction of the form
$
V\rightarrow P
$
has a rate $R_2(u,v) =k_2 v$.
In the classical Gray--Scott formulation, the product $P$ is not included as a dynamical variable, as it is assumed to be chemically inert (waste) and does not feed back into the evolution of the other species $U$ and $V$. Instead, it is removed from the system, allowing the model to be written solely in terms of the active species $U$ and $V$.

Therefore, the reaction contribution for each species is given by
\[
\sum_r \nu_{u,r}R_r(u) 
= -k_1uv^2,\quad \sum_r \nu_{v,r}R_r(u) 
= k_1uv^2 - k_2v,\quad \sum_r \nu_{p,r}R_r(u) 
=  k_2v.
\]
We can identify the stoichiometric coefficients  
$$
(\nu_{u,1}, \nu_{v,1}, \nu_{p,1}) 
= (-1,+1, 0), \quad (\nu_{u,2}, \nu_{v,2}, \nu_{p,2}) 
= (0,-1, +1).
$$
Hence, by summing we can see how the nonlinear reaction terms cancel in the total balance.

These problems have special properties, which for the case of the two-species model \eqref{eq:gs} are given as follows:
\begin{enumerate}[label=($P_\arabic*)$]
\item{\label{P1}} $d_1,d_2 >0$,

\item{\label{P2}} $g_1,g_2\in L^\infty(0,\infty;L^p(D))$, for any $p\ge 2$,

\item{\label{P3}} $\gamma \ge 0$,

\item{\label{P4}} For all $u,v\ge 0$,
$
b_1v + g_1\ge 0 \mbox{ and } a_2u + g_2\ge 0,
$

\item{\label{P5}} There exist constants $c_1\ge 0$ and $c_2 >0$ such that $a_1u + b_1v -\gamma uv^2 + g_1\le c_1(1-c_2u)$,

\item{\label{P6}} $a_1 + a_2\le 0$ and $b_1 + b_2\le 0$.
\end{enumerate}
If the properties \ref{P1}-\ref{P6} hold, McGough and Riley showed several theoretical results, including that all positive solutions to the system remain bounded and smooth for all time. This result holds for a domain $D\subset\RR^N$, $N=2,3$, with a piecewise smooth boundary.  However, we establish all results  in two spatial dimensions ($N = 2$).

The goal of this paper is to show that the asymptotic behavior of the system of partial differential equations given by \eqref{eq:gs} can be effectively monitored by projecting the system onto a finite-dimensional space, known as determining modes. For a precise definition we refer to \cref{defn:determining_modes}. 

The results presented here are closely connected to a recent work on a continuous data assimilation algorithm for the Gray--Scott model~\cite{randrianasolo2025discrete}, where synchronization of a nudged solution with the true dynamics was achieved from coarse cell-averaged measurements. In that work, feedback is applied only to coarse finite volume grids through an interpolation operator $\mI_H$. The present determining modes result provides an alternative characterization of such synchronization: \cref{thm:lambda_main} shows that, once the low modes, corresponding to the observed scales, are controlled, the remaining unobserved high modes decay asymptotically. In this sense, the determining subspace in the current work coincides with the set of spatial modes on which feedback or observation must act to ensure recovery of the full Gray--Scott dynamics. The two results, therefore, describe complementary aspects of the same mechanism: one from the perspective of dynamical systems theory, the other, more algorithmic, from the perspective of control and data assimilation.

Beyond data assimilation, the present work is also connected to other approaches that seek finite-dimensional representations of infinite-dimensional dynamics. Among the most widely used are the \textit{Proper Orthogonal Decomposition} (POD) and the construction of \textit{Approximate Inertial Manifolds} (AIM). Both methods aim to identify low-dimensional structures that capture the long-term dynamics of dissipative partial differential equations.

In POD, one seeks a finite set of empirical orthogonal modes that optimally represent the energy content of the system; see, for instance, the article by Berkooz, Holmes, and Lumley~\cite{berkooz1993the}, and the later developments summarized in \cite{foias2001navier}. The construction of AIM, introduced by Foia\c{s}, Sell, and Temam~\cite{foias1988inertial}, provides a method in which high modes are expressed as Lipschitz functions of the low modes, thereby yielding a finite-dimensional approximation of the global attractor. In contrast, the determining modes approach establishes explicit analytical conditions ensuring that a finite set of modes uniquely determines the full dynamics.

Related ideas have been further developed in the context of \textit{determining nodes}, \textit{determining volume elements}, and \textit{determining forms}, see, e.g., Foia\c{s}, Jolly, Kravchenko, and Titi~\cite{foias2012a}; Foia\c{s} and Temam~\cite{foias1984determination};
Jones and Titi~\cite{jones1992determining, jones1993upper}; and  the work of Kaper, Wang, and Wang on nonlinear   Ginzburg--Landau equations \cite{hans1998determining}. These works share the same objective: to quantify the finite number of degrees of freedom needed to describe the long-time dynamics of dissipative systems. Our present results extend these ideas to reaction-diffusion systems relevant to pattern formation, including the Gray--Scott and Glycolysis models.

The rest of the paper is organized as follows.
In \cref{sec:prelim2}, we collect background materials and necessary assumptions on the
system and present the main result.
In \cref{sec:preliminaries}, we introduce some necessary lemmas and propositions that serve as
intermediate steps in the proof of the main result.
In \cref{sec:lambda_main}, we prove that the reaction-diffusion system \eqref{eq:gs} admits a  set of determining modes, as defined in \cref{defn:determining_modes}.
In \Cref{sec:num_exp}, we perform numerical experiments in support of the main result by applying a  data assimilation algorithm to the Gray--Scott model.
Finally, in \cref{sec:discussion}, we conclude with a discussion of the implications of the main result for other models of Turing pattern, including its connection to data assimilation for the Gray--Scott system.

%
%
\section{Background materials and the main statement}\label{sec:prelim2}
In this section, we will introduce  standard notation and recall results on the solution theory of systems like \eqref{eq:gs}. Then, we present the definition of determining modes and the main result. The proof of the main result is postponed to \cref{sec:lambda_main}.

We denote by $L^2\coloneqq L^2( D)$, the usual Lebesgue space of squared integrable functions. It is endowed with the scalar product $\la\,\cdot\,,\,\cdot\,\ra$.
For $1\leq p\leq \infty$, we denote the usual Lebesgue spaces by $L^p$, which are endowed with the standard norms denoted by
\[
\Vert u\Vert_{L^p}^p\coloneqq\int_{ D} \vert u(x)\vert^p\dd x,\, 1\leq p< \infty, \quad\mbox{ and }\quad\Vert u\Vert_{L^\infty}\coloneqq\sup_{x\in D} \vert u(x)\vert.
\]

In the paper, $(u,v)$ is understood as the global smooth solution to \eqref{eq:gs}, whose existence is made precise in the following lemmas, see also {\cite[Theorem 10]{mcgough2005a}}.
\begin{lemma}\label{lem:global1}
	We assume that the properties \ref{P1}-\ref{P6} hold for the system \eqref{eq:gs}. If $u_0,v_0\in L^2(D)$ then for the solution to \eqref{eq:gs} it holds that $u(t),v(t)\ge 0$ and $u(t),v(t)\in \mC^2(D)$ for all $t\ge 0$.
\end{lemma}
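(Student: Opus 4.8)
The plan is to follow the strategy of McGough and Riley \cite{mcgough2005a}, proving the statement in three stages: local existence of a classical solution, propagation of nonnegativity, and a priori bounds that exclude finite-time blow-up and thereby promote local to global existence. Since the reaction terms in \cref{eq:gs} are polynomial, hence locally Lipschitz, and since by \ref{P1} the operators $d_j\Delta$ with zero Neumann data generate analytic semigroups on $L^2(D)$, standard parabolic semigroup theory yields a unique maximal mild solution $(u,v)$ on an interval $[0,T_{\max})$; by bootstrapping against the smoothness of the nonlinearity this solution is classical, $u(t),v(t)\in C^2(D)$, on that interval. The remaining work is to show $T_{\max}=\infty$ together with nonnegativity.

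For nonnegativity I would exploit the quasi-positivity encoded in \ref{P4}, which guarantees that the positive quadrant $\{u\ge 0,\,v\ge 0\}$ is invariant. Indeed, when $u=0$ the reaction in the first equation reduces to $b_1v+g_1\ge 0$, and when $v=0$ the reaction in the second equation reduces to $a_2u+g_2\ge 0$; in both cases \ref{P4} makes the nonlinearity point inward along the corresponding face. A comparison argument (or, equivalently, testing the equations against the negative parts $u^-$ and $v^-$ via Stampacchia truncation and using the Neumann boundary condition to control the diffusion terms) then shows that $u_0,v_0\ge 0$ forces $u(t),v(t)\ge 0$ throughout the existence interval.

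The heart of the matter is the a priori bound. First, \ref{P5} furnishes a supersolution for $u$: since $\partial_t u-d_1\Delta u=a_1u+b_1v-\gamma uv^2+g_1\le c_1(1-c_2u)$, comparison with the spatially constant ODE $\dot w=c_1(1-c_2w)$, $w(0)=\|u_0\|_{L^\infty}$, yields the uniform bound $\|u(t)\|_{L^\infty}\le\max\{\|u_0\|_{L^\infty},1/c_2\}$. Second, the telescoping structure combined with \ref{P6} gives mass control: adding the two equations cancels the cubic terms $\mp\gamma uv^2$, and integrating over $D$ while using the Neumann condition to kill the Laplacians leaves $\frac{d}{dt}\int_D(u+v)\dd x=\int_D\big[(a_1+a_2)u+(b_1+b_2)v+g_1+g_2\big]\dd x\le\int_D(g_1+g_2)\dd x$, which by \ref{P2} bounds $\int_D(u+v)\dd x$ uniformly in time, and hence $\|v(t)\|_{L^1}$ since $u\ge 0$.

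The main obstacle, and the step demanding the most care, is upgrading $v$ from $L^1$ to $L^\infty$: the source term $+\gamma uv^2$ in the second equation is cubic and could a priori drive blow-up. Here I would use that $u$ is already bounded, so that $\gamma uv^2\le\gamma\|u\|_{L^\infty}v^2$, and invoke Pierre's duality method \cite{pierre2010global} to obtain space-time $L^2$ and then $L^p$ bounds on $v$, followed by a bootstrap through parabolic $L^p$ and Schauder estimates to reach $L^\infty$ and finally $C^2$ regularity. Once a uniform $L^\infty$ bound on $(u,v)$ is established, the blow-up alternative for the mild solution forces $T_{\max}=\infty$, and parabolic regularity applied on each compact time interval delivers $u(t),v(t)\in C^2(D)$ for all $t\ge 0$, completing the argument.
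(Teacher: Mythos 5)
The paper itself offers no proof of this lemma: it is quoted directly from \cite[Theorem 10]{mcgough2005a}, and your proposal reconstructs exactly the strategy behind that cited result --- local existence by semigroup theory, quasi-positivity \ref{P4} for invariance of the positive quadrant, the \ref{P5} supersolution for $u$, telescoping mass control via \ref{P6}, and a duality/bootstrap upgrade of $v$ to $L^\infty$ --- so the two proofs coincide in approach. Your sketch is sound for the stated conclusion, up to two easily repaired details: the comparison bound $\Vert u(t)\Vert_{L^\infty}\le\max\{\Vert u_0\Vert_{L^\infty},1/c_2\}$ must be launched from a small positive time via parabolic smoothing (only $u_0\in L^2(D)$ is assumed, so $\Vert u_0\Vert_{L^\infty}$ may be infinite), and the \ref{P6} mass inequality as you wrote it yields an $L^1$ bound growing linearly in $t$ rather than ``uniformly in time,'' which is nonetheless sufficient here since only local-in-time control is needed to exclude finite-time blow-up.
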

More generally, the following estimates hold for $(u,v)$, see e.g.\ {\cite[Theorem 2]{mcgough2005a}}.
\begin{lemma}\label{lem:global2}
	If $u_0,v_0\in L^p(D)$, then $u,v\in L^\infty(0,\infty;L^p(D))$ for all $p\ge 2$.
\end{lemma}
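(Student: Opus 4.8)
The plan is to establish the two bounds separately, handling $u$ by a direct energy estimate that exploits the self-limiting structure \ref{P5}, and then controlling $v$ by a bootstrap energy argument that uses the $u$-bound as an input. Throughout I rely on \cref{lem:global1}, so that $u(t),v(t)\ge 0$ and the manipulations below are justified for the classical solution, and I use repeatedly that $|\mO|=1$ and that the boundary terms arising from integration by parts vanish thanks to the zero Neumann condition.

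For the $u$-component, fix $p\ge 2$ and test the first equation of \cref{eq:gs} with $pu^{p-1}\ge 0$. Integration by parts turns the diffusion term into $-d_1p(p-1)\int_\mO u^{p-2}|\nabla u|^2\dd x\le 0$, which I discard, while \ref{P5} bounds the reaction contribution by $p\int_\mO u^{p-1}c_1(1-c_2u)\dd x=pc_1\int_\mO u^{p-1}\dd x-pc_1c_2\Vert u\Vert_{L^p}^p$. Since $|\mO|=1$, Hölder's inequality gives $\int_\mO u^{p-1}\dd x\le\Vert u\Vert_{L^p}^{p-1}$, and Young's inequality absorbs $pc_1\Vert u\Vert_{L^p}^{p-1}$ into $\tfrac12 pc_1c_2\Vert u\Vert_{L^p}^p$ up to an additive constant. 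This produces a differential inequality $\tfrac{d}{dt}\Vert u\Vert_{L^p}^p\le -\tfrac12 pc_1c_2\Vert u\Vert_{L^p}^p+C_p$, and Gronwall's lemma yields the time-uniform bound $\Vert u(t)\Vert_{L^p}\le\max\{\Vert u_0\Vert_{L^p},(2C_p/(pc_1c_2))^{1/p}\}$; the degenerate case $c_1=0$ is even simpler, since the reaction term is then nonpositive. Hence $u\in L^\infty(0,\infty;L^p(\mO))$ for every $p\ge 2$.

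For the $v$-component the difficulty is the production term $+\gamma uv^2$, which is superlinear in $v$ and carries the ``wrong'' sign for a dissipative estimate; moreover, because $d_1\neq d_2$ in general, the nonlinearities do not cancel inside a single weighted energy $\Vert u+v\Vert_{L^p}^p$ (the associated diffusion quadratic form is indefinite), so one cannot reduce to a scalar equation. My plan is therefore a two-layer argument. First, I obtain a base low-order bound: adding the two equations of \cref{eq:gs} and integrating over $\mO$, the nonlinear terms cancel, and \ref{P6} together with $u,v\ge 0$ gives $\tfrac{d}{dt}\int_\mO(u+v)\dd x\le\int_\mO(g_1+g_2)\dd x$; combined with the time-uniform $L^1$-control of $u$ coming from \ref{P5}, this yields a uniform bound on $\Vert v(t)\Vert_{L^1}$ (the mass-control mechanism, in the spirit of \cite{pierre2010global,mcgough2005a}). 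Second, I run an energy bootstrap: testing the $v$-equation with $pv^{p-1}$ produces the dissipation $\tfrac{4(p-1)d_2}{p}\Vert\nabla v^{p/2}\Vert_{L^2}^2$, while the bad term $p\gamma\int_\mO uv^{p+1}\dd x$ is estimated by Hölder against the time-uniform $L^q$-bound of $u$ from the first part and then by the two-dimensional Gagliardo--Nirenberg inequality applied to $f=v^{p/2}$, interpolating between $\Vert\nabla v^{p/2}\Vert_{L^2}$ and the lower-order norm $\Vert v\Vert_{L^{p/2}}$. Because $N=2$, the power of the gradient that appears is strictly less than one, so Young's inequality absorbs it into the dissipation at the cost of a term depending only on $\Vert v\Vert_{L^{p/2}}$ (Poincaré's inequality handling the mean part). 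Feeding the base $L^1$-bound into this scheme and iterating the doubling $p\mapsto 2p$ upgrades the control to every $L^p$, $p\ge 2$, uniformly in time.

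The step I expect to be the main obstacle is the treatment of the production term $\gamma uv^2$ in the $v$-equation: its superlinearity makes a naive energy estimate produce a super-linear right-hand side in $\Vert v\Vert_{L^p}^p$ that could blow up, and the resolution is the combination --- specific to two space dimensions --- of the Gagliardo--Nirenberg interpolation (which trades one factor of the high norm against the parabolic dissipation) with the time-uniform $L^q$-control of $u$ established in the first part and the base mass bound. A secondary delicate point is the uniformity in time of the base $L^1$-estimate in the degenerate case where the inequalities in \ref{P6} are equalities and the net linear dissipation is lost; there one must exploit the consumption structure carried by \ref{P5} (equivalently, a Pierre-type duality estimate) to recover time-independence. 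Once both ingredients are in place the bootstrap is routine and yields the claimed $u,v\in L^\infty(0,\infty;L^p(\mO))$.
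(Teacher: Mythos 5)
Your estimate for $u$ is correct: testing the first equation of \cref{eq:gs} with $pu^{p-1}$ and using \ref{P5} does yield a uniform-in-time $L^p$ bound (note that the paper itself gives no proof of \cref{lem:global2} but simply cites \cite[Theorem 2]{mcgough2005a}, so your argument stands on its own). The first genuine gap is your ``base'' $L^1$ bound for $v$. Adding the two equations and integrating, with \ref{P6} and $u,v\ge 0$, gives only $\tfrac{\dd}{\dd t}\int_D(u+v)\dd x\le \Vert (g_1+g_2)(t)\Vert_{L^1}$, i.e.\ linear growth in $t$, not a uniform bound. The uniform control of $\int_D u\dd x$ lets you close a Gronwall argument only when $b_1+b_2<0$ strictly, since then $(b_1+b_2)\int_D v\dd x\le (b_1+b_2)\int_D(u+v)\dd x+\vert b_1+b_2\vert\sup_t\int_D u\dd x$. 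But \ref{P6} allows $b_1+b_2=0$, and in that degenerate case no argument can succeed, because the statement itself fails: take $d_1=d_2=1$, $a_1=a_2=b_1=b_2=0$, $\gamma=1$, $g_1\equiv 0$, $g_2\equiv 1$, $u_0=v_0=0$. All of \ref{P1}--\ref{P6} hold (\ref{P5} with $c_1=0$), yet the solution is $u\equiv 0$ and $v(t,x)=t$, so $v\notin L^\infty(0,\infty;L^p(D))$. Hence the point you call ``secondary and delicate'' is not repairable by \ref{P5} or by a Pierre-type duality estimate; any complete proof (and the hypotheses of the cited theorem) must contain strict linear dissipation of $v$.

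The second gap is quantitative and sits exactly at the first step of your bootstrap. With $f=v^{p/2}$ and base norm $\Vert f\Vert_{L^1}=\Vert v\Vert_{L^{p/2}}^{p/2}$, the two-dimensional Gagliardo--Nirenberg estimate of $\int_D uv^{p+1}\dd x\le \Vert u\Vert_{L^\infty}\Vert f\Vert_{L^{r}}^{r}$, $r=2(p+1)/p$, carries the gradient power $r\theta$ with $\theta=1-1/r$, i.e.\ $1+2/p$ --- not ``strictly less than one'' as you claim. This power lies in $(1,2)$ for $p>2$, where Young's inequality indeed absorbs it, but at the first doubling step $p=2$ (base $L^1$) it equals $2$: the term is critical and cannot be absorbed. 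Matters are worse because your first part only provides $u\in L^\infty(0,\infty;L^q(D))$ for finite $q$, not $u\in L^\infty$, which pushes the power strictly above $2$ at $p=2$; this particular defect could be repaired by a comparison/maximum-principle argument, since \ref{P5} gives the pointwise inequality $\partial_t u - d_1\Delta u\le c_1(1-c_2u)$. The real crux of the lemma is therefore the uniform-in-time upgrade from $L^1$ to $L^{1+\delta}$, for which the naive energy method is powerless in two dimensions; one needs the telescoping structure, e.g.\ integrating the $u$-equation over $D\times(t,t+1)$ to obtain $\gamma\int_t^{t+1}\int_D uv^2\dd x\dd s\le \sup_s\Vert u(s)\Vert_{L^1}+C\big(1+\sup_s\Vert v(s)\Vert_{L^1}\big)$, followed by $L^1$ parabolic smoothing/duality in the style of Hollis--Martin--Pierre. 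Without that ingredient your scheme stalls before it starts; with it (and with the corrected exponents and the $L^\infty$ bound on $u$), your doubling argument does go through for the remaining steps.
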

\subsection{Determining modes}\label{subsec:determining}
Let $(\tg_1,\tg_2)$ be a deterministic perturbation of $(g_1,g_2)$ such that $(\tg_1,\tg_2)$ and $(g_1,g_2)$ have the same asymptotic behavior for large times, i.e.\
\begin{equation}\label{eq:asymptotic_behavior}
	\limsup_{t\to\infty}  \Vert (g_1-\tg_1)(t)\Vert_{L^2}+ \limsup_{t\to\infty}\Vert (g_2-\tg_2)(t)\Vert_{L^2} = 0.
\end{equation}
To $(\tg_1,\tg_2)$ we associate the pair $(\tu,\tv)$ that satisfies for all $t\ge 0$ to the system
\begin{equation} \label{eq:gs2}
	\left\{
	\begin{aligned}
		\partial_t{\tu}(t) &= d_1\Delta \tu(t) +a_1
		\tu(t) +b_1\tv(t) -\gamma\tu(t)\tv^2(t) + \tg_1(t),&\mbox{ in } D,
		\\
		\partial_t{\tv}(t) &= d_2\Delta \tv(t) +a_2\tu(t)+ b_2\tv(t) + \gamma\tu(t)\tv^2(t) + \tg_2(t), &\mbox{ in } D,
		\\
		\tu(0) &= u_0\ge 0,\;\tv(0) = v_0\ge 0, &\mbox{ in } D,
	\end{aligned}
	\right.
\end{equation}
with zero Neumann boundary conditions on $\partial D$. We assume that the following properties hold for the system \eqref{eq:gs2},
\begin{enumerate}[label=($\tilde{P}_\arabic*)$]
	\item{\label{tP1}} $d_1,d_2 >0$,
	
	\item{\label{tP2}} $\tg_1,
	\tg_2\in L^\infty(0,\infty;L^p(D))$, for any $p\ge 2$,
	
	\item{\label{tP3}} $\gamma\ge 0$,
	
	\item{\label{tP4}} For all $u,v\ge 0$, $b_1v + \tg_1\ge 0 \mbox{ and } a_2u + \tg_2\ge 0,$
	
	\item{\label{tP5}} There exist constants $\tc_1\ge 0$ and $
	\tc_2 >0$ such that $a_1u + b_1v - \gamma uv^2 +  \tg_1\le \tc_1(1-\tc_2u)$,
	
	\item{\label{tP6}} $a_1 + a_2\le 0$ and $b_1 + b_2\le 0$.
\end{enumerate}
In what follows, $(\tu,\tv)$ is understood as the global smooth solution to \eqref{eq:gs2}.   \Cref{lem:global1} and \Cref{lem:global2} are also valid for the system \eqref{eq:gs2} provided that the properties \ref{tP1}-\ref{tP6} hold.

Let $\{\psi_m:m\ge 0 \}$  be an orthonormal basis of $L^2(D)$ formed by eigenfunctions of the Laplacian with homogeneous Neumann boundary condition, 
\[
-\Delta\psi_m = \lambda_m\psi_m\mbox{ in } D, \quad \partial_n\psi_m = 0\mbox{ on }\partial D,\quad m\ge 0,
\]
where $0=\lambda_0<\lambda_1\leq\lambda_2\leq\cdots\leq\lambda_m\leq\cdots, \mbox{ and }\lambda_m\to\infty \mbox{ when } m\to\infty.$
We add the normalized constant eigenfunction $\psi_0\coloneqq 1/\sqrt{\vert D\vert}$, where $\vert D\vert$ denotes the Lebesgue measure of the domain $D$.

Given some $M\in\NN$, we define the finite subspace $H_M\coloneqq\mathrm{span}\big\{\psi_0, \psi_1,\ldots, \psi_{M-1}\big\}$ and denote the orthogonal projection of $L^2(D)$ onto $H_M$ by $P_M$, the natural embedding by $I$, and define $Q_M$ by $Q_M\coloneqq I - P_M$.
Given two indices $M$ and $N$ and the solution $(u,v)$ 	 to \eqref{eq:gs},  we define   its Galerkin approximation
by $(P_Mu,P_Nv)$.

Since $\psi_0\in H_M$, the function $Q_M v$ is orthogonal to constants, hence $\int Q_M v = 0$. Therefore the Poincaré inequality applies,
\begin{equation*}
\Vert Q_M v\Vert_{L^2}^2\leq \frac{1}{\lambda_{M}} \Vert \nabla Q_M v\Vert_{L^2}^2.
\end{equation*}

We introduce the following definition of determining modes:
\begin{definition}\label{defn:determining_modes}
The first modes $H_M\coloneqq\mathrm{span}\big\{\psi_0,\psi_1,\ldots,\psi_{M-1}\big\}$ and $H_N\coloneqq \mathrm{span}\big\{\psi_0,\psi_1,\ldots,\psi_{N-1}\big\}$ are called determining modes for the solution $(u,v)$ to \eqref{eq:gs}, if
\begin{equation*}
\lim_{t\to\infty}\Vert P_{M}(u - \tu)(t)\Vert_{L^2} + \lim_{t\to\infty}\Vert P_{N}(v - \tv)(t)\Vert_{L^2}= 0,
\end{equation*}
implies that
\begin{equation*}
\lim_{t\to\infty}\Vert Q_{M}(u - \tu)(t)\Vert_{L^2}+\lim_{t\to\infty}\Vert Q_{N}(v - \tv)(t)\Vert_{L^2}= 0,
\end{equation*}
where, $(\tu,\tv)$ denotes the solution to \eqref{eq:gs2}.
\end{definition}
\subsection{The main statement}\label{subsec:main_stat}

In addition to properties \ref{P1}-\ref{P6}, we suppose that there exist $c_3>0$ and $c_4\ge 0$ such that
\begin{equation}\label{eq:extra_assumption}
	(a_1 + a_2)u + (b_1 + b_2)v \leq -c_3 (u + v) + c_4,\quad\forall u,v\ge 0.
\end{equation}
Let $ c_D\coloneqq \sqrt{\vert D\vert}$, and define
$$
A_2 \coloneqq  c_D \sum_{j=1}^2 (\vert a_j\vert + \vert b_j\vert ), \quad g^* \coloneqq \limsup_{t \to \infty}  \Vert g_1(t) +g_2(t)  \Vert_{L^2}.
$$
We define the following constants
\[
d \coloneqq \min\{d_1, d_2\},
\qquad
F \coloneqq c_1 + c_4c_D + g^* + A_2^2 + c_1c_D,
\qquad
\Gr \coloneqq \frac{F}{d^2 \lambda_1}.
\]

\begin{theorem}\label{thm:lambda_main}
	We assume that the properties \ref{P1}-\ref{P6} hold for the system \eqref{eq:gs}, and the damping assumption \eqref{eq:extra_assumption} is satisfied.
	If there exist $M,N\in\NN$ such that
	\begin{equation*}
		\frac{\min\{\lambda_{M}, \lambda_{N}\}}{36\lambda_1}
		>  \Gr +   \gamma^2 \bigg[d^2\Big(1 + \frac{d\lambda_1 \sqrt{2}}{c_1c_2}\Big)^4  + \frac{2d^5\lambda_1^3}{c_3^3}(1 + \alpha)^2\bigg]\Gr^4,
	\end{equation*}
	where $\alpha \coloneqq   {(d_1^2+ d_2^2)}/{(2d_1d_2)}$, then the first $(M,N)$ modes are determining for the solution $(u,v)$ to  the system \eqref{eq:gs}.
\end{theorem}
We postpone the proof of  \cref{thm:lambda_main} to \cref{sec:lambda_main}.
In the first place, we show some preliminary estimates, notably \cref{lem:determining}, \cref{lem:form}, and \cref{lem:lambda_etimate1} that will be used as intermediate steps in proving \cref{thm:lambda_main}.

%
%
\section{Preliminaries}\label{sec:preliminaries}
We will start with a Gronwall-type Lemma. 
\begin{lemma}\label{lem:determining}
	Let $\alpha$ be a locally integrable real valued function on $(0,\infty)$, satisfying for some $0<T<\infty$ the following conditions:
	\begin{equation}\label{eq:determining_alpha}
		\gamma\coloneqq\liminf_{t\to\infty}\frac{1}{T}\int_{t}^{t+T}\alpha(s)\dd s >0,\quad
		\Gamma\coloneqq\limsup_{t\to\infty}\frac{1}{T}\int_{t}^{t+T}\alpha^-(s)\dd s <\infty,
	\end{equation}
	where $\alpha^-(t) = \max\{-\alpha(t),0\}$.
	Further let $\beta$ be a real-valued measurable function defined on $(0,\infty)$ such that
	\begin{equation}\label{eq:determining_beta}
		\lim_{t\to \infty}\beta(t) =0.
	\end{equation}
	Suppose that $X$ is an absolutely continuous non-negative function on $(0,\infty)$, with $X_0\coloneqq X(0)$, such that
	\begin{equation*}
		\frac{\dd }{\dd t}X(t) + \alpha(t) X(t)\leq \beta(t),\;\mbox{a.e.\  on } (0,\infty).
	\end{equation*}
	Then $X(t)\to 0 $ as $t\to \infty$.
	Actually, we have
	\begin{equation*}
		X(t)\leq X_0\Gamma'\mathrm{e}^{-(\gamma/2T)(t-0)} + \frac{2\Gamma' T}{\gamma}\sup_{0\leq \tau\leq t}\Vert\beta(\tau)
		\Vert_{L^2},
	\end{equation*}
	with $\Gamma'\coloneqq \mathrm{e}^{\Gamma + 1 + \gamma/2}$.
\end{lemma}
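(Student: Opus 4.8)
The plan is to use the classical integrating-factor (variation-of-constants) method and then to convert the time-averaged hypotheses \cref{eq:determining_alpha} on $\alpha$ into genuine exponential decay of the integrating factor. First I would set $\phi(t)\coloneqq\int_0^t\alpha(s)\dd s$ and multiply the inequality $\frac{\dd}{\dd t}X+\alpha X\le\beta$ by $\e^{\phi(t)}$; this is legitimate because $X$ is absolutely continuous and $\alpha$ is locally integrable, and it gives $\frac{\dd}{\dd t}\big(\e^{\phi(t)}X(t)\big)\le\e^{\phi(t)}\beta(t)$ a.e. Integrating from $0$ to $t$ and dividing by $\e^{\phi(t)}$ yields the representation
\begin{equation*}
	X(t)\le X_0\,\e^{-\int_0^t\alpha(s)\dd s}+\int_0^t\e^{-\int_s^t\alpha(\tau)\dd\tau}\,\beta(s)\dd s,
\end{equation*}
so that everything reduces to a lower bound for the running integral $\int_s^t\alpha$.

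The crux of the proof is to show that, with $\Gamma'\coloneqq\e^{\Gamma+1+\gamma/2}$ as in the statement,
\begin{equation*}
	\e^{-\int_s^t\alpha(\tau)\dd\tau}\le\Gamma'\,\e^{-(\gamma/2T)(t-s)},\qquad 0\le s\le t.
\end{equation*}
I would prove this by partitioning $[s,t]$ into consecutive windows of length $T$, writing $t-s=kT+\rho$ with $0\le\rho<T$ and $k\ge (t-s)/T-1$. On each full window whose left endpoint is large enough, the $\liminf$ condition in \cref{eq:determining_alpha} forces $\int\alpha\ge\gamma/2$, so the $k$ full windows contribute at least $k\gamma/2\ge\frac{\gamma}{2}\big((t-s)/T-1\big)$; on the leftover window of length $\rho$ I bound $\int\alpha\ge-\int\alpha^-$ from below and invoke the $\limsup$ condition to get $\int\alpha^-\le\Gamma+1$. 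Adding these gives $\int_s^t\alpha\ge\frac{\gamma}{2T}(t-s)-(\Gamma+1+\gamma/2)$, which is exactly the displayed bound. The delicate point — and the main obstacle — is the endpoint bookkeeping: the liminf/limsup bounds hold only beyond some threshold $t_\ast$, so I would first establish the estimate for $s\ge t_\ast$ and then absorb the finitely many windows in $[0,t_\ast]$ (together with the fractional window handled via $\alpha^-$) into an enlarged constant $\Gamma'$, so that the bound remains valid down to $s=0$.

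With the exponential bound in hand the two terms of the representation are routine. The homogeneous term is immediately $\le X_0\Gamma'\e^{-(\gamma/2T)t}$. For the convolution term I factor out $\sup_{0\le\tau\le t}|\beta(\tau)|$ and use $\int_0^t\e^{-(\gamma/2T)(t-s)}\dd s\le 2T/\gamma$, which produces $\frac{2\Gamma'T}{\gamma}\sup_{0\le\tau\le t}|\beta(\tau)|$; together these give the displayed quantitative estimate. Finally, to upgrade this to $X(t)\to0$ I would avoid the crude supremum over all of $[0,t]$ (which need not vanish) and instead split $\int_0^t=\int_0^{t_1}+\int_{t_1}^t$: given $\varepsilon>0$, \cref{eq:determining_beta} lets me choose $t_1$ with $|\beta(s)|<\varepsilon$ for $s\ge t_1$; the first integral is at most $\Gamma'\e^{-(\gamma/2T)(t-t_1)}\int_0^{t_1}|\beta(s)|\dd s\to0$ as $t\to\infty$, while the second is at most $\frac{2\Gamma'T}{\gamma}\varepsilon$. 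Letting $t\to\infty$ and then $\varepsilon\to0$ yields $\limsup_{t\to\infty}X(t)=0$, which is the assertion.
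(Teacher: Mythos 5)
Your overall route --- variation of constants, then a windowing argument that converts the averaged hypotheses \cref{eq:determining_alpha} into exponential decay of the integrating factor --- is the standard proof of this lemma, and it is essentially how the results that the paper cites for its one-line proof (\cite[Lemma 4.1]{foias1983asymptotic}, \cite[Lemma 4.1]{jones1992determining}) are established; structurally you are on the right track and more self-contained than the paper. The genuine gap is in your ``crux'' inequality: its constants are wrong because factors of $T$ are dropped. Since $\gamma$ and $\Gamma$ are defined as \emph{time averages} $\frac1T\int_t^{t+T}$, a full window of length $T$ with left endpoint beyond the threshold $t_*$ contributes $\int\alpha\ge \gamma T/2$, not $\gamma/2$, and the fractional window costs $\int\alpha^-\le T(\Gamma+1)$, not $\Gamma+1$. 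What the windowing argument actually yields, for $t\ge s\ge t_*$, is
\begin{equation*}
\int_s^t\alpha(\tau)\,\mathrm{d}\tau\;\ge\;\frac{\gamma}{2}(t-s)-T\Big(\Gamma+1+\frac{\gamma}{2}\Big),
\qquad\text{i.e.}\qquad
\mathrm{e}^{-\int_s^t\alpha}\le \mathrm{e}^{T(\Gamma+1+\gamma/2)}\,\mathrm{e}^{-(\gamma/2)(t-s)},
\end{equation*}
whereas your stated bound $\mathrm{e}^{-\int_s^t\alpha}\le\Gamma'\mathrm{e}^{-(\gamma/2T)(t-s)}$ with $\Gamma'=\mathrm{e}^{\Gamma+1+\gamma/2}$ is false in general. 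Counterexample: let $T=10$ and let $\alpha$ be periodic with $\alpha=-10$ on $[10k,10k+1)$ and $\alpha=2$ on $[10k+1,10k+10)$. Then every length-$10$ window average equals $\gamma=0.8$ and $\Gamma=1$, yet $\int_0^1\alpha=-10$, so at $(s,t)=(0,1)$ the left-hand side is $\mathrm{e}^{10}$ while your right-hand side is $\mathrm{e}^{2.4}\mathrm{e}^{-0.04}<\mathrm{e}^{2.4}$. (The failure occurs exactly in your fractional-window step $\int\alpha^-\le\Gamma+1$.) The same example shows that the quantitative display in the lemma cannot be proved as printed --- a defect of the statement itself, which the paper never proves; moreover, pushing the estimate down to $s=0$ forces the constant to depend on the transient behavior of $\alpha$ on $[0,t_*]$, not only on $\gamma$, $\Gamma$, and $T$.

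Fortunately this is a defect of constants, not of structure. Your concluding argument --- splitting the Duhamel integral at a time $t_1$ beyond which $\vert\beta\vert<\varepsilon$ --- uses nothing about the exponential bound except that it has the form $C\mathrm{e}^{-c(t-s)}$ for $s\ge t_*$ with some $C,c>0$; with the corrected bound it gives $\limsup_{t\to\infty}X(t)\le (2C/\gamma)\varepsilon$ for every $\varepsilon>0$, hence $X(t)\to 0$, and this qualitative conclusion is the only part of the lemma actually used in the paper (in the proof of \cref{lem:form}). Two small points to tighten when you write this up: (i) your term $\int_0^{t_1}\vert\beta(s)\vert\,\mathrm{d}s$ tacitly assumes $\beta$ is integrable near $0$, which is not among the hypotheses; it is cleaner to apply variation of constants on $[t_1,t]$ directly, using only the finiteness of $X(t_1)$; (ii) state explicitly that only windows with left endpoint $\ge t_*$ are used, so that the homogeneous term is controlled through $X(t_1)$ rather than $X_0$, at which point the claimed uniform constant $\Gamma'$ is no longer needed anywhere.
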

\begin{proof}
	The result follows from Gronwall lemma.
	See \cite[Lemma 4.1]{foias1983asymptotic} and  \cite[Lemma 4.1]{jones1992determining}.
\end{proof}

We introduce the notation
\[
\xi \coloneqq u - \tilde{u}, 
\qquad 
\eta \coloneqq v - \tilde{v}, 
\qquad 
h_1 \coloneqq g_1 - \tilde{g}_1, 
\qquad 
h_2 \coloneqq g_2 - \tilde{g}_2;
\]
and the quantity
\begin{align*}
	K_{M,N}(t)\coloneqq{}  &{-\big\la Q_M\xi(t)\big(a_1-\gamma v^2(t)\big) + Q_N\eta(t)\big(b_1 - \gamma(v(t) + \tv(t))\tu(t)\big),Q_M\xi(t)\big\ra}
	\\
	&-{\big\la Q_M\xi(t)\big(a_2+\gamma v^2(t)\big) + Q_N\eta(t)\big(b_2+\gamma(v(t) + \tv(t))\tu(t)\big), Q_N\eta(t)\big\ra},
\end{align*}
which appears in \Cref{lem:form} and in the proof of \Cref{lem:lambda_etimate1}. We now derive an estimate for $K_{M,N}$.
\begin{lemma}\label{lem:estimate_K}
	Let $A_2\coloneqq c_D\sum_{j=1}^2(\vert a_j\vert + \vert b_j\vert)$. The following estimate holds
	\begin{equation}\label{eq:estimate_nlt}
		\begin{split}
			\frac{\vert K_{M,N}\vert }{\big(\Vert Q_M\xi\Vert_{L^2}+\Vert  Q_N\eta\Vert_{L^2}\big)^2}
			\leq  3 \Big(\gamma\big(\Vert  v\Vert_{L^4}^2 + \Vert  \tv\Vert_{L^4}^2 + \Vert\tu\Vert_{L^4}^2\big) + A_2\Big)\bigg(\frac{\Vert\nabla Q_M\xi\Vert_{L^2}+\Vert\nabla Q_N\eta\Vert_{L^2}}{ \Vert Q_M\xi\Vert_{L^2}+\Vert  Q_N\eta\Vert_{L^2} }\bigg).
		\end{split}
	\end{equation}
\end{lemma}
\begin{proof}
	
	By straightforward calculations, we get
	\begin{align*}
		\vert K_{M,N}\vert 
		\leq{}&  {\Vert  a_1-\gamma v^2\Vert_{L^2}\Vert Q_M\xi\Vert_{L^4}^2  + \Vert b_1- \gamma (v + \tv )\tu\Vert_{L^2}\Vert Q_N\eta\Vert_{L^4}^2}
		\\
		&  +\Big(\Vert a_2+ \gamma v^2\Vert_{L^2}+\Vert  b_2+\gamma(v + \tv)\tu\Vert_{L^2}\Big)\Vert Q_M\xi Q_N\eta \Vert_{L^2}
		\\
		\leq{}& \Big(\gamma\Vert  v\Vert_{L^4}^2  + \gamma\Vert (v + \tv)\tu\Vert_{L^2} + \vert a_1\vert + \vert b_1\vert\Big)\Big({ \Vert Q_M\xi\Vert_{L^4}^2+\Vert Q_N\eta\Vert_{L^4}^2}\Big)
		\\
		&  +\Big(\gamma\Vert  v\Vert_{L^4}^2+\gamma\Vert  (v + \tv)\tu\Vert_{L^2} + \vert a_2\vert + \vert b_2\vert\Big)\Big({\Vert Q_M\xi Q_N\eta\Vert_{L^2}}\Big) 
		\\
		\leq{}& \Big(\gamma\Vert  v\Vert_{L^4}^2  + \gamma\Vert (v + \tv)\tu\Vert_{L^2} + A_2\Big)\Big({ \Vert Q_M\xi\Vert_{L^4}^2+\Vert Q_N\eta\Vert_{L^4}^2} + \Vert Q_M\xi Q_N\eta\Vert_{L^2}\Big),
	\end{align*}
	where $A_2\coloneqq c_D\sum_{j=1}^2(\vert a_j\vert + \vert b_j\vert)$. Next, observe that
	\begin{align*}
		\Vert Q_M\xi\Vert_{L^4}^2+\Vert Q_N\eta\Vert_{L^4}^2
		&\leq \Vert Q_M\xi\Vert_{L^2}\Vert\nabla Q_M\xi\Vert_{L^2}+ \Vert Q_N\eta\Vert_{L^2}\Vert\nabla Q_N\eta\Vert_{L^2}
		\\
		&\leq \Big(\Vert Q_M\xi\Vert_{L^2} + \Vert Q_N\eta\Vert_{L^2}\Big)\Big(\Vert\nabla Q_M\xi\Vert_{L^2}+\Vert\nabla Q_N\eta\Vert_{L^2}\Big),
	\end{align*}
	and
	\begin{align*}
		\Vert Q_M\xi Q_N\eta \Vert_{L^2}
		&\leq  \Vert Q_M\xi \Vert_{L^4}\Vert Q_N\eta \Vert_{L^4}
		\\
		&\leq \frac12\Vert Q_M\xi \Vert_{L^4}^2+\frac12\Vert Q_N\eta \Vert_{L^4}^2
		\\
		&\leq \frac12 \Vert Q_M\xi \Vert_{L^2}\Vert\nabla Q_M\xi \Vert_{L^2}+\frac12  \Vert Q_N\eta \Vert_{L^2}\Vert \nabla Q_N\eta \Vert_{L^2}
		\\
		&\leq \frac12 \Big(\Vert Q_M\xi \Vert_{L^2}+\Vert Q_N\eta \Vert_{L^2}\Big)\Big(\Vert\nabla Q_M\xi \Vert_{L^2}+\Vert \nabla Q_N\eta \Vert_{L^2}\Big).
	\end{align*}
	Combining the previous estimates, we obtain
	\begin{align*}
		\frac{\vert K_{M,N}\vert }{\big(\Vert Q_M\xi\Vert_{L^2}+\Vert  Q_N\eta\Vert_{L^2}\big)^2}
	\leq  \frac32 \Big(\gamma\Vert  v\Vert_{L^4}^2  +\gamma\Vert  (v + \tv)\tu\Vert_{L^2} + A_2\Big)\bigg(\frac{\Vert\nabla Q_M\xi\Vert_{L^2}+\Vert\nabla Q_N\eta\Vert_{L^2}}{ \Vert Q_M\xi\Vert_{L^2}+\Vert  Q_N\eta\Vert_{L^2} }\bigg).
	\end{align*}
	We apply  the H\"older and Young inequalities to the term $\Vert  (v + \tv)\tu\Vert_{L^2}$,
	\begin{align*}
		\Vert  (v + \tv)\tu\Vert_{L^2} 
		\leq \Vert  v + \tv\Vert_{L^4}\Vert \tu\Vert_{L^4}
		\leq \tfrac12\Vert  v + \tv\Vert_{L^4}^2 + \tfrac12\Vert \tu\Vert_{L^4}^2.
	\end{align*}
	Finally, using $\Vert  v + \tv\Vert_{L^4}^2\leq 2(\Vert  v \Vert_{L^4}^2 + \Vert  \tv\Vert_{L^4}^2) $, we obtain
	\begin{align*}
		\frac{\vert K_{M,N}\vert }{\big(\Vert Q_M\xi\Vert_{L^2}+\Vert  Q_N\eta\Vert_{L^2}\big)^2}
		\leq  3 \Big(\gamma\big(\Vert  v\Vert_{L^4}^2 + \Vert  \tv\Vert_{L^4}^2 + \Vert\tu\Vert_{L^4}^2\big) + A_2\Big)\bigg(\frac{\Vert\nabla Q_M\xi\Vert_{L^2}+\Vert\nabla Q_N\eta\Vert_{L^2}}{ \Vert Q_M\xi\Vert_{L^2}+\Vert  Q_N\eta\Vert_{L^2} }\bigg).
	\end{align*}
	
	The proof of \eqref{eq:estimate_nlt} is complete.
\end{proof}
\begin{lemma}\label{lem:form}
	We assume that the properties \ref{P1}-\ref{P6} (resp. \ref{tP1}-\ref{tP6}) hold for the system \eqref{eq:gs} (resp. \eqref{eq:gs2}). 
	If there exist $T>0$ and $M,N\in\NN$ such that
	\begin{equation}\label{eq:form1}
		\begin{split}
			&\liminf_{t\to\infty}
			\Bigg(\frac{1}{T}\int_{t}^{t + T}
			\frac{d\Big(\Vert\nabla Q_M\xi(s)\Vert_{L^2} +  \Vert\nabla Q_N\eta(s)\Vert_{L^2}\Big)^2 +   K_{M,N}(s) }{\Big(\Vert Q_M\xi(s)\Vert_{L^2} + \Vert Q_N\eta(s)\Vert_{L^2}\Big)^2}  \dd s \Bigg)>0
		\end{split}
	\end{equation}
	and
	\begin{equation}\label{eq:form2}
		\lim_{t\to\infty}\;\Vert P_{M}\xi(t)\Vert_{L^2} + \lim_{t\to\infty}\;\Vert P_{N}\eta(t)\Vert_{L^2}= 0,
	\end{equation}
	then it holds that
	\begin{equation*}
		\lim_{t\to\infty}\;\Vert Q_{M}\xi(t)\Vert_{L^2}+\lim_{t\to\infty}\;\Vert Q_{N}\eta(t)\Vert_{L^2}= 0.
	\end{equation*}
\end{lemma}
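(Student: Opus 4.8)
The plan is to reduce the statement to the scalar Gronwall-type \cref{lem:determining} applied to the high-mode energy
\[
X(t) \coloneqq \Vert Q_M\xi(t)\Vert_{L^2}^2 + \Vert Q_N\eta(t)\Vert_{L^2}^2 .
\]
First I would subtract the two systems \eqref{eq:gs} and \eqref{eq:gs2}. Writing the nonlinearity as $uv^2-\tu\tv^2 = v^2\,\xi + (v+\tv)\tu\,\eta$, the pair $(\xi,\eta)$ solves a linear non-autonomous reaction–diffusion system with coefficients $a_1-\gamma v^2$, $b_1-\gamma(v+\tv)\tu$, $a_2+\gamma v^2$, $b_2+\gamma(v+\tv)\tu$ and forcing $(h_1,h_2)$. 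Since $-\Delta$ is self-adjoint and $Q_M,Q_N$ are its spectral projections, they commute with the diffusion; applying $Q_M$, $Q_N$ and testing with $Q_M\xi$, $Q_N\eta$, the Neumann boundary terms vanish and the diffusion produces $-d_1\Vert\nabla Q_M\xi\Vert_{L^2}^2-d_2\Vert\nabla Q_N\eta\Vert_{L^2}^2$. Splitting $\xi=P_M\xi+Q_M\xi$ and $\eta=P_N\eta+Q_N\eta$ in the reaction terms, the ``high$\times$high'' part is exactly $-K_{M,N}(t)$, while the ``low$\times$high'' part is a remainder $R(t)$ carrying a factor $P_M\xi$ or $P_N\eta$ in every summand. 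Summing the two identities gives
\[
\tfrac12\tfrac{\dd}{\dd t}X = -d_1\Vert\nabla Q_M\xi\Vert_{L^2}^2 - d_2\Vert\nabla Q_N\eta\Vert_{L^2}^2 - K_{M,N} + R + \la h_1,Q_M\xi\ra + \la h_2,Q_N\eta\ra .
\]

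I would then recast this as $\tfrac{\dd}{\dd t}X + \alpha X \le \beta$, with
\[
\alpha \coloneqq \frac{2d_1\Vert\nabla Q_M\xi\Vert_{L^2}^2 + 2d_2\Vert\nabla Q_N\eta\Vert_{L^2}^2 + 2K_{M,N}}{X},
\qquad
\beta \coloneqq 2R + 2\la h_1,Q_M\xi\ra + 2\la h_2,Q_N\eta\ra ,
\]
and verify the three hypotheses of \cref{lem:determining}. The positivity of the time-averaged $\liminf$ of $\alpha$ is the quantitative content of the coercivity assumption \eqref{eq:form1} (using $d=\min\{d_1,d_2\}$ and the elementary equivalence $X\le(\Vert Q_M\xi\Vert_{L^2}+\Vert Q_N\eta\Vert_{L^2})^2\le 2X$ to pass between the two denominators). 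For the forcing I would show $\beta(t)\to0$: every term of $R$ is, by Hölder's inequality, bounded by the vanishing factor $\Vert P_M\xi\Vert_{L^4}+\Vert P_N\eta\Vert_{L^4}$ (which tends to $0$ by \eqref{eq:form2}) times coefficients such as $\Vert v\Vert_{L^4}^2$, $\Vert(v+\tv)\tu\Vert_{L^2}$ and $\Vert Q_M\xi\Vert_{L^4}$, $\Vert Q_N\eta\Vert_{L^4}$, all bounded uniformly in $t$ by the global $L^p$ estimates of \cref{lem:global2}; the forcing-difference terms are handled by $|\la h_j,Q_\bullet\ra|\le\Vert h_j\Vert_{L^2}\Vert Q_\bullet\Vert_{L^2}$ together with \eqref{eq:asymptotic_behavior}.

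The main obstacle is the reaction coupling $K_{M,N}$, which has no definite sign and therefore threatens both the lower bound on $\tfrac1T\int\alpha$ and the finiteness of $\Gamma=\limsup\tfrac1T\int\alpha^-$. I would control it with the two-dimensional Gagliardo–Nirenberg inequality $\Vert w\Vert_{L^4}^2\le C\Vert w\Vert_{L^2}(\Vert w\Vert_{L^2}+\Vert\nabla w\Vert_{L^2})$, applied to $w=Q_M\xi$ and $w=Q_N\eta$, so that after a Young step the quadratic terms of $K_{M,N}$ satisfy $|K_{M,N}|\le\tfrac{d}{2}\big(\Vert\nabla Q_M\xi\Vert_{L^2}^2+\Vert\nabla Q_N\eta\Vert_{L^2}^2\big)+CX$, the constant $C$ depending only on the uniform $\Vert v\Vert_{L^4}^2$, $\Vert\tu\Vert_{L^4}^2$ bounds from \cref{lem:global2}. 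This absorption gives $\alpha\ge d\big(\Vert\nabla Q_M\xi\Vert_{L^2}^2+\Vert\nabla Q_N\eta\Vert_{L^2}^2\big)/X - C\ge -C$, whence $\alpha^-\le C$ and $\Gamma<\infty$. With all three hypotheses in hand, \cref{lem:determining} yields $X(t)\to0$, that is $\Vert Q_M\xi(t)\Vert_{L^2}+\Vert Q_N\eta(t)\Vert_{L^2}\to0$, which is the assertion of the lemma.
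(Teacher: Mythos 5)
Your overall strategy coincides with the paper's: subtract the two systems, write $uv^2-\tu\tv^2=v^2\xi+(v+\tv)\tu\,\eta$, test with $Q_M\xi$ and $Q_N\eta$ to get an energy identity for $X=\Vert Q_M\xi\Vert_{L^2}^2+\Vert Q_N\eta\Vert_{L^2}^2$ in which the high--high reaction part is $K_{M,N}$ and the low--high part is a remainder, and then invoke the Gronwall-type \cref{lem:determining}. Two of your three verifications are legitimate variants of the paper's bookkeeping: keeping the remainder and the $\la h_j,\cdot\ra$ terms inside $\beta$ and using the uniform $L^p$ bounds of \cref{lem:global2} (the paper instead Young-processes them, absorbing an $\varepsilon X$ into $\alpha$), and the Gagliardo--Nirenberg/Young absorption of $K_{M,N}$ into the dissipation to get $\alpha^-\le C$, hence $\Gamma<\infty$, which parallels the paper's estimate \eqref{eq:estimate_nlt}.

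The gap is in the remaining, crucial step: the verification that $\liminf_{t\to\infty}\tfrac1T\int_t^{t+T}\alpha(s)\dd s>0$ follows from \eqref{eq:form1}. Your $\alpha$ contains the \emph{signed} term $2K_{M,N}/X$, whereas the integrand in \eqref{eq:form1} contains $+\vert K_{M,N}\vert/\big(\Vert Q_M\xi\Vert_{L^2}+\Vert Q_N\eta\Vert_{L^2}\big)^2$. As you note yourself, $K_{M,N}$ has no definite sign; on any time set where $K_{M,N}<0$ your $\alpha$ carries $-2\vert K_{M,N}\vert/X$ while \eqref{eq:form1} carries $+\vert K_{M,N}\vert/(\cdots)^2$, a discrepancy of order $\vert K_{M,N}\vert/X$ with the wrong sign; the equivalence $X\le(\cdots)^2\le 2X$ adjusts constants but can never repair a sign. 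Worse, \eqref{eq:form1} is satisfied automatically for \emph{every} $M,N$, since $\Vert\nabla Q_Mw\Vert_{L^2}^2\ge\lambda_{M+1}\Vert Q_Mw\Vert_{L^2}^2$ makes its integrand $\ge 2d\min(\lambda_{M+1},\lambda_{N+1})>0$; so it cannot encode the information you actually need, namely that the negative part of $K_{M,N}$ is dominated on time average by the dissipation --- that domination is precisely what the spectral-gap conditions of \cref{lem:lambda_etimate1} and \cref{thm:lambda_main} supply later. For comparison, the paper defines $\alpha$ with $+2\vert K_{M,N}\vert/(\cdots)^2-2\varepsilon$, which makes the liminf step immediate from \eqref{eq:form1} but shifts the sign difficulty into the claim $\tfrac{\dd}{\dd t}X+\alpha X\le\beta$ (which, when $K_{M,N}<0$, would require $-2K_{M,N}\le-2\vert K_{M,N}\vert X/(\cdots)^2$). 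Your proposal therefore relocates, rather than closes, the delicate point: as written, the liminf hypothesis of \cref{lem:determining} is not verified, and the conclusion does not follow.
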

\begin{proof}
	We fix $M,N\in\NN$, and $t>0$.
	Standard calculations give
	\begin{align}
		\label{eq:main1}
		\partial_t\xi(t,x) - d_1\Delta\xi(t,x)
		&= \gamma\big(\tu(t,x)\tv^2(t,x) - u(t,x)v^2(t,x)\big) + a_1\xi(t,x) + b_1\eta(t,x) + h_1(t,x),
		\\
		\label{eq:main2}
		\partial_t\eta(t,x) - d_2\Delta\eta(t,x)
		&= \gamma\big(u(t,x)v^2(t,x) - \tu(t,x)\tv^2(t,x)\big) + a_2\xi(t,x) + b_2\eta(t,x) + h_2(t,x).
	\end{align}
	Observe the following calculations
	\begin{align*}
		\gamma(uv^2 &- \tu\tv^2)  + a_2\xi + b_2\eta
		\\
		&= (u - \tu)\gamma v^2 + (v + \tv)\tu\gamma(v - \tv) + a_2\xi + b_2\eta
		\\\notag
		&= \xi\gamma v^2 + (v + \tv)\tu\gamma
		\eta + a_2\xi + b_2\eta
		\\\notag
		&=(Q_M\xi)(a_2 + \gamma v^2) + \big(b_2+\gamma(v + \tv)\tu\big)(Q_N\eta) + (P_M\xi)(a_2 + \gamma v^2) + \big(b_2+\gamma(v + \tv)\tu\big)(P_N\eta).
	\end{align*}
	We take the scalar product in $L^2(D)$ with $Q_M\xi$ and $Q_N\eta$ on both sides of \eqref{eq:main1}-\eqref{eq:main2}, respectively. Then, we introduce the quantity
	\begin{align*}
		R_{M,N}(t)\coloneqq{} &\big\la(P_M\xi(t))\big(a_1-\gamma v^2(t)\big) + \big(b_1 - \gamma(v(t) + \tv(t))\tu(t)\big)(P_N\eta(t)), Q_M\xi(t)\big\ra
		\\
		&+ \big\la(P_M\xi(t))\big(a_2+\gamma v^2(t)\big) + \big(b_2 + \gamma(v(t) + \tv(t))\tu(t)\big)(P_N\eta(t)), Q_N\eta(t)\big\ra.
	\end{align*}
	It follows that
	\begin{align}\label{eq:ODE}
		\begin{split}
			&\frac12\frac{\dd}{\dd t}\Big[\Vert Q_M\xi(t)\Vert_{L^2}^2 + \Vert Q_N\eta(t)\Vert_{L^2}^2\Big]+  d_1\Vert\nabla Q_M\xi(t)\Vert_{L^2}^2 +  d_2\Vert\nabla Q_N\eta(t)\Vert_{L^2}^2  +K_{M,N}(t)
			\\
			&=  R_{M,N}(t) + \big\la h_1(t), Q_M\xi(t)\big\ra +\big\la h_2(t), Q_N\eta(t)\big\ra.
		\end{split}
	\end{align}
	
	The rest of the proof relies on \Cref{lem:determining}. Thus, the next step consists in writing the ODE~\eqref{eq:ODE} in the form of
	\begin{equation*}
		\frac{\dd }{\dd t}X(t)  + \alpha(t) X(t) \leq \beta(t),
	\end{equation*}
	where $\alpha$ and $\beta$  are functions to be determined.
	To this end, we derive an estimate for $R_{M,N}$. 
	
	Let $\varepsilon>0$. Applying the Cauchy--Schwarz inequality, and the Young inequality we obtain
	\begin{align*}
		&R_{M,N}(t)
		\le  \Big(\Vert  P_M\xi(t) \Vert_{L^4}\Vert a_1-\gamma v^2(t)\Vert_{L^4} + \Vert P_N\eta(t)\Vert_{L^4}\Vert b_1-\gamma(v(t) + \tv(t))\tu(t)\Vert_{L^4}\Big)\Vert Q_M\xi(t)\Vert_{L^2}
		\\
		& \qquad+  \Big(\Vert  P_M\xi(t) \Vert_{L^4}\Vert a_2+ \gamma v^2(t)\Vert_{L^4} + \Vert P_N\eta(t)\Vert_{L^4}\Vert b_2+\gamma (v(t) + \tv(t))\tu(t)\Vert_{L^4}\Big)\Vert Q_N\eta(t)\Vert_{L^2}
		\\
		&\leq  \frac{1}{\varepsilon}   \Big(\Vert  P_M\xi(t) \Vert_{L^4} + \Vert P_N\eta(t)\Vert_{L^4}\Big)^2\Big(\gamma \big(\Vert  v(t)\Vert_{L^8}^2  +  \Vert (v(t) + \tv(t)) \tu(t)\Vert_{L^4}^2\big) + A_4\Big)^2 
		\\
		&\qquad + \varepsilon\Big(\Vert Q_M\xi(t)\Vert_{L^2}^2 + \Vert Q_N\eta(t)\Vert_{L^2}^2\Big),
	\end{align*}
	where $A_4\coloneqq  c_D^{1/2}\sum_{j=1}^2(\vert a_j\vert+\vert b_j\vert)$. 
	As in the proof of \cref{lem:estimate_K}, we apply  the H\"older and Young inequalities to the term $\Vert  (v + \tv)\tu\Vert_{L^4}$ to obtain
	\begin{equation*}
		\Vert  (v + \tv)\tu\Vert_{L^4} 
		\leq \Vert  v + \tv\Vert_{L^8}  \Vert   \tu\Vert_{L^8} 
		\leq \tfrac12 \Vert  v + \tv\Vert_{L^8}^2 + \tfrac12  \Vert   \tu\Vert_{L^8} ^2.
	\end{equation*}
	Using $\Vert  v + \tv\Vert_{L^8}^2\leq 2(\Vert  v \Vert_{L^8}^2 + \Vert  \tv\Vert_{L^8}^2) $, and absorbing the numerical constants into $C$, we obtain
	\begin{align*}
		R_{M,N}(t) \leq &\frac{C}{\varepsilon}   \Big(\Vert  P_M\xi(t) \Vert_{L^4}^2 + \Vert P_N\eta(t)\Vert_{L^4}^2\Big)\Big(\gamma^2\big(\Vert  v(t)\Vert_{L^8}^4 +  \Vert  \tv(t)\Vert_{L^8}^4  + \Vert \tu(t)\Vert^4_{L^8}\big) + A_4^2\Big) 
		\\
		&\qquad + \varepsilon\Big(\Vert Q_M\xi(t)\Vert_{L^2}^2 + \Vert Q_N\eta(t)\Vert_{L^2}^2\Big).
	\end{align*}

	Let $w = \xi(t)$ or $w = \eta(t)$.  Since $-\Delta \psi_m = \lambda_m \psi_m$ and $\{\psi_m\}$ is an orthonormal basis of $L^2(D)$,
	\[
	\Vert \nabla P_Mw\Vert_{L^2}^2 
	= \sum_{m=0}^{M-1} \lambda_m \vert \la w,\psi_m\ra\vert^2
	\le \lambda_{M-1} \sum_{m=0}^{M-1} \vert \la w,\psi_m\ra\vert^2
	= \lambda_{M-1} \Vert P_Mw\Vert_{L^2}^2.
	\]
	By the Ladyzhenskaya inequality in two dimensions, see e.g. \cite[Proposition III.2.35]{boyer2013mathematical},
	\[
	\Vert P_Mw\Vert_{L^4}^2 \le  \Vert P_Mw\Vert_{L^2}\Vert\nabla P_Mw\Vert_{L^2}
	\le \lambda_{M-1}^{1/2}\Vert P_Mw\Vert_{L^2}^2.
	\]
	Substituting these bounds into the estimate for $R_{M,N}(t)$, we obtain
	\begin{align*}
		R_{M,N}(t) 
		&\leq \frac{C\ell}{\varepsilon}   \Big(\Vert  P_M\xi(t) \Vert_{L^2}^2 + \Vert P_N\eta(t)\Vert_{L^2}^2\Big)\Big(\gamma^2\big(\Vert  v(t)\Vert_{L^8}^4 +  \Vert  \tv(t)\Vert_{L^8}^4  + \Vert \tu(t)\Vert^4_{L^8}\big) + A_4^2\Big) 
		\\
		&+ \varepsilon\big(\Vert Q_M\xi(t)\Vert_{L^2}^2 + \Vert Q_N\eta(t)\Vert_{L^2}^2\big),
	\end{align*}
	where $\ell\coloneqq 6\max\{\lambda_{M-1}^{1/2}, \lambda_{N-1}^{1/2}\}$.

	Now, we insert the estimate for $R_{M,N}$ into \eqref{eq:ODE}. After reorganizing the resulting inequality, the ODE~\eqref{eq:ODE} takes the form required by \Cref{lem:determining} with
	\begin{align*}
		X(t)&\coloneqq \Vert Q_M\xi(t)\Vert_{L^2}^2 + \Vert Q_N\eta(t)\Vert_{L^2}^2,
		\\
		\alpha(t)&\coloneqq \frac{2d\Big(\Vert\nabla Q_M\xi(t)\Vert_{L^2} +  \Vert\nabla Q_N\eta(t)\Vert_{L^2}\Big)^2 +  2 K_{M,N}(t) }{\Big(\Vert Q_M\xi(t)\Vert_{L^2} + \Vert Q_N\eta(t)\Vert_{L^2}\Big)^2}   - 2\varepsilon,
		\\
		\beta(t)&\coloneqq  \frac{C\ell}{\varepsilon} \Big(\Vert  P_M\xi(t) \Vert_{L^2}^2 + \Vert P_N\eta(t)\Vert_{L^2}^2\Big)\Big(\gamma^2\big(\Vert  v(t)\Vert_{L^8}^4 +  \Vert  \tv(t)\Vert_{L^8}^4  + \Vert \tu(t)\Vert^4_{L^8}\big) + A_4^2\Big)  
		\\
		&\qquad + \frac{2}{\varepsilon}\Big(\Vert h_1(t)\Vert_{L^2}^2 +\Vert h_2(t)\Vert_{L^2}^2\Big).
	\end{align*}
	It remains to show that $\beta$ (resp.\,$\alpha$) satisfies to the hypothesis \eqref{eq:determining_beta}   (resp.\,\eqref{eq:determining_alpha}) of \cref{lem:determining}.
	
	The hypothesis \eqref{eq:determining_beta} of \cref{lem:determining} on $\beta$ holds by the asymptotic behavior \eqref{eq:asymptotic_behavior}, \cref{lem:global2}, and the control on the  first $(M,N)$ modes \eqref{eq:form2}.
	The hypothesis \eqref{eq:determining_alpha} of \cref{lem:determining} on $\alpha$ holds as a consequence of \eqref{eq:form1}.
	Indeed,  on one hand, we can fix $\varepsilon >0$ small enough such that
	\begin{equation*}
		\liminf_{t\to\infty}\frac{1}{T}\int_t^{t + T}\alpha(s)\dd s > 0.
	\end{equation*}
	On the other hand, using the estimate \eqref{eq:estimate_nlt} for $K_{M,N}$, we obtain
	\begin{align*}
		\alpha(t)&\ge 2d\bigg(\frac{\Vert\nabla Q_M\xi(t)\Vert_{L^2} +  \Vert\nabla Q_N\eta(t)\Vert_{L^2}}{\Vert Q_M\xi(t)\Vert_{L^2} + \Vert Q_N\eta(t)\Vert_{L^2}}\bigg)^2
		\\
		&\hspace{25pt}-   6 \Big(\gamma\big(\Vert  v\Vert_{L^4}^2 + \Vert  \tv\Vert_{L^4}^2 + \Vert\tu\Vert_{L^4}^2\big) + A_2\Big)\bigg(\frac{\Vert\nabla Q_M\xi(t)\Vert_{L^2}+\Vert\nabla Q_N\eta(t)\Vert_{L^2}}{ \Vert Q_M\xi(t)\Vert_{L^2}+\Vert  Q_N\eta(t)\Vert_{L^2} }\bigg)  -2\varepsilon.
	\end{align*}
	Using Young inequality, we have
	\begin{align*}
		&6\Big(\gamma\big(\Vert  v\Vert_{L^4}^2 + \Vert  \tv\Vert_{L^4}^2 + \Vert\tu\Vert_{L^4}^2\big) + A_2\Big)\bigg(\frac{\Vert\nabla Q_M\xi(t)\Vert_{L^2}+\Vert\nabla Q_N\eta(t)\Vert_{L^2}}{ \Vert Q_M\xi(t)\Vert_{L^2}+\Vert  Q_N\eta(t)\Vert_{L^2} }\bigg) 
		\\
		&\leq \frac{3}{4d}\Big(\gamma\big(\Vert  v\Vert_{L^4}^2 + \Vert  \tv\Vert_{L^4}^2 + \Vert\tu\Vert_{L^4}^2\big) + A_2\Big)^2
		+ \frac{4d}{2}\bigg(\frac{\Vert\nabla Q_M\xi(t)\Vert_{L^2}+\Vert\nabla Q_N\eta(t)\Vert_{L^2}}{ \Vert Q_M\xi(t)\Vert_{L^2}+\Vert  Q_N\eta(t)\Vert_{L^2} }\bigg)^2
		\\
		&\leq \frac{3}{4d}\Big( \gamma^2\big(\Vert  v\Vert_{L^4}^4 + \Vert  \tv\Vert_{L^4}^4+ \Vert  \tu\Vert_{L^4}^4\big) +A_2^2\Big)
		+ {2d}\bigg(\frac{\Vert\nabla Q_M\xi(t)\Vert_{L^2}+\Vert\nabla Q_N\eta(t)\Vert_{L^2}}{ \Vert Q_M\xi(t)\Vert_{L^2}+\Vert  Q_N\eta(t)\Vert_{L^2} }\bigg)^2.
	\end{align*}
	It implies that
	\begin{equation*}
		\alpha(t) \ge - {\frac{3}{4d}\Big( \gamma^2\big(\Vert  v\Vert_{L^4}^4 + \Vert  \tv\Vert_{L^4}^4+ \Vert  \tu\Vert_{L^4}^4\big) +A_2^2\Big)}  -2\varepsilon,
	\end{equation*}
	and thus
	\begin{equation*}
		\alpha^-(t)\leq  {\frac{3}{4d}\Big(  \gamma^2\big(\Vert  v\Vert_{L^4}^4 + \Vert  \tv\Vert_{L^4}^4+ \Vert  \tu\Vert_{L^4}^4\big) +A_2^2\Big)}+ 2\varepsilon,
	\end{equation*}
	which, by \cref{lem:global2}, provides \eqref{eq:determining_alpha}.
	Finally, every assumption of \cref{lem:determining} is satisfied, we can conclude that
	\begin{equation*}
		\lim_{t\to \infty}\Vert Q_M\xi(t)\Vert_{L^2}^2 + \lim_{t\to \infty}\Vert Q_N\eta(t)\Vert_{L^2}^2 =0.
	\end{equation*}
	This completes the proof of \cref{lem:form}.	
\end{proof}

\begin{lemma}\label{lem:lambda_etimate1}
Recall that $A_2\coloneqq c_D\sum_{j=1}^2(\vert a_j\vert + \vert b_j\vert)$. If there exist $T>0$ and $M,N\in\NN$ such that
\begin{equation}\label{eq:lambda_etimate1}
\frac{A_2^2}{d^2}+\frac{ \gamma^2}{d^2   }\bigg(\limsup_{t\to\infty} \frac{1}{T}\int_t^{t+T}\Big(\Vert  v(s)\Vert_{L^4}^4 + \Vert  \tv(s)\Vert_{L^4}^4 + \Vert\tu(s)\Vert_{L^4}^4\Big)\dd s \bigg) <  \frac1{36}\min\{\lambda_{M}, \lambda_{N}\},
\end{equation}
then the first $(M,N)$ modes are determining for the solution $(u,v)$ to  the system \eqref{eq:gs}.
\end{lemma}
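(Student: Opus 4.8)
The plan is to deduce \cref{lem:lambda_etimate1} from \cref{lem:form} by verifying its hypothesis \cref{eq:form1} under the assumption \cref{eq:lambda_etimate1}. First observe that, after writing $\xi = u - \tu$ and $\eta = v - \tv$, the premise of \cref{defn:determining_modes} is exactly the hypothesis \cref{eq:form2} of \cref{lem:form}. Thus, to show that the first $(M,N)$ modes are determining, I assume $\lim_{t\to\infty}\Vert P_M\xi(t)\Vert_{L^4} + \lim_{t\to\infty}\Vert P_N\eta(t)\Vert_{L^4} = 0$ and only need to establish \cref{eq:form1} for the given $M,N$ and a suitable $T>0$; \cref{lem:form} then delivers $\lim_{t\to\infty}\Vert Q_M\xi(t)\Vert_{L^2} + \lim_{t\to\infty}\Vert Q_N\eta(t)\Vert_{L^2} = 0$, which is the desired conclusion. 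Crucially, \cref{eq:form1} involves only the high modes $Q_M\xi, Q_N\eta$ and the nonlinear form $K_{M,N}$, so it can be checked directly from \cref{eq:lambda_etimate1} without invoking the assumption on the low modes.

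To verify \cref{eq:form1}, I would exploit the spectral gap of $-\Delta$. Since $Q_M\xi$ is supported on the eigenmodes with index exceeding $M$ and $Q_N\eta$ on those exceeding $N$, the Poincaré-type inequalities
\begin{equation*}
	\Vert\nabla Q_M\xi(t)\Vert_{L^2}^2 \ge \lambda_{M+1}\Vert Q_M\xi(t)\Vert_{L^2}^2, \qquad \Vert\nabla Q_N\eta(t)\Vert_{L^2}^2 \ge \lambda_{N+1}\Vert Q_N\eta(t)\Vert_{L^2}^2
\end{equation*}
hold for every $t$. These bound the first (dissipative) term of the integrand in \cref{eq:form1} from below through $\lambda_{M+1}$ and $\lambda_{N+1}$, while the nonlinear contribution is controlled by \cref{eq:estimate_nlt}, which estimates $\vert K_{M,N}(t)\vert / (\Vert Q_M\xi(t)\Vert_{L^2} + \Vert Q_N\eta(t)\Vert_{L^2})^2$ by a multiple of the gradient ratio weighted by $\gamma^2\Vert v(t)\Vert_{L^4}^4 + \gamma^2\Vert\tu(t)\Vert_{L^4}^4 + A_2$.

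I would then combine the two via Young's inequality, using part of the dissipation to absorb the nonlinear term from \cref{eq:estimate_nlt} and retaining the remainder to generate the spectral gap. After dividing by $d^2$, this leaves in the integrand a quantity bounded below by a constant multiple of $(\lambda_{M+1} + \lambda_{N+1}) - d^{-2}\big(A_2^2 + \gamma^2\Vert v(t)\Vert_{L^4}^4 + \gamma^2\Vert\tu(t)\Vert_{L^4}^4\big)$. Passing to the time-average $\tfrac1T\int_t^{t+T}$ and then to $\liminf_{t\to\infty}$, the $L^4$-bounds of \cref{lem:global2} guarantee the relevant averages are finite, and \cref{eq:lambda_etimate1} is precisely the statement that the resulting averaged lower bound is strictly positive. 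This yields \cref{eq:form1}, and \cref{lem:form} completes the proof.

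The step I expect to be delicate is the Young-inequality bookkeeping: the dissipation $d_1\Vert\nabla Q_M\xi\Vert_{L^2}^2 + d_2\Vert\nabla Q_N\eta\Vert_{L^2}^2$ must be split so that one portion dominates $\vert K_{M,N}\vert$ while the other, through the two separate spectral gaps $\lambda_{M+1}$ and $\lambda_{N+1}$, reproduces the full additive threshold $\lambda_{M+1} + \lambda_{N+1}$ of \cref{eq:lambda_etimate1} rather than a weaker combination such as $\min\{\lambda_{M+1},\lambda_{N+1}\}$. Equally, one must ensure that the strict inequality in the $\limsup$-averaged condition \cref{eq:lambda_etimate1} transfers to strict positivity of the $\liminf$-average in \cref{eq:form1}, which is where the Young parameter $\varepsilon$ (as in \cref{lem:form}) has to be chosen small enough; the remaining manipulations are routine substitutions.
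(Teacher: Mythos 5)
Your proposal follows the same route as the paper's own proof: reduce to \cref{lem:form}, then verify its hypothesis \cref{eq:form1} by combining the bound \cref{eq:estimate_nlt} on $\vert K_{M,N}\vert$ with the spectral gap of the high-mode projections, absorbing the nonlinear contribution into the dissipation. The only technical difference is how the absorption is carried out: you use a pointwise-in-time Young inequality and then average, whereas the paper first applies the Cauchy--Schwarz inequality to the time averages over $[t,t+T]$ and factors the result as $\big[2d\big(\tfrac1T\int H_{M,N}^2\big)^{1/2}-6\big(\tfrac1T\int(\gamma^2\Vert v\Vert_{L^4}^4+\gamma^2\Vert\tu\Vert_{L^4}^4+A_2^2)\big)^{1/2}\big]\big(\tfrac1T\int H_{M,N}^2\big)^{1/2}$, where $H_{M,N}=\big(\Vert\nabla Q_M\xi\Vert_{L^2}+\Vert\nabla Q_N\eta\Vert_{L^2}\big)/\big(\Vert Q_M\xi\Vert_{L^2}+\Vert Q_N\eta\Vert_{L^2}\big)$. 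Both routes lead to a threshold condition of the same form, differing only in numerical constants, which the paper does not track carefully anyway.

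That said, the step you flag as delicate is the genuine weak point, and you should know the paper does not resolve it either. From $\Vert\nabla Q_M\xi\Vert_{L^2}\ge\lambda_{M+1}^{1/2}\Vert Q_M\xi\Vert_{L^2}$ and $\Vert\nabla Q_N\eta\Vert_{L^2}\ge\lambda_{N+1}^{1/2}\Vert Q_N\eta\Vert_{L^2}$ one only gets $H_{M,N}\ge\min\{\lambda_{M+1}^{1/2},\lambda_{N+1}^{1/2}\}$, since $H_{M,N}$ is a weighted average of the two individual ratios; the bound is sharp (take $Q_N\eta=0$ and the high-mode part of $\xi$ proportional to $\psi_{M+1}$). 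No choice of Young parameter, nor any splitting of the dissipation, can upgrade this to the additive threshold $\lambda_{M+1}+\lambda_{N+1}$: when one of the two high-mode components vanishes, the other spectral gap simply is not present in the estimate. The paper's proof glosses over exactly this point by inserting $(\lambda_{M+1}+\lambda_{N+1})^{1/2}$ as a lower bound for $\big(\tfrac1T\int H_{M,N}^2\big)^{1/2}$ in its definition of $\delta$, which is unjustified. An honest execution of either your argument or the paper's proves the lemma with $\min\{\lambda_{M+1},\lambda_{N+1}\}$ (and a harmless numerical constant) in place of $\lambda_{M+1}+\lambda_{N+1}$ in \cref{eq:lambda_etimate1}; this still yields the qualitative conclusion of \cref{thm:lambda_main} after adjusting the threshold there. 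So your plan is faithful to the paper, but the bookkeeping you defer cannot be carried out as stated, and the same criticism applies to the published proof.
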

\begin{proof}
	We fix $T,t>0$ and $M,N\in\NN$.
	We introduce the quantity
	\begin{equation*}
		H_{M,N}( \xi(s),\eta(s))\coloneqq \frac{\Vert\nabla Q_M\xi(s)\Vert_{L^2} +  \Vert\nabla Q_N\eta(s)\Vert_{L^2}}{\Vert Q_M\xi(s)\Vert_{L^2} + \Vert Q_N\eta(s)\Vert_{L^2}}.
	\end{equation*}

	Using \Cref{lem:estimate_K}, we have
	\begin{align*}
		&\frac{1}{T}\int_{t}^{t + T}\bigg(2d\Big(H_{M,N}( \xi(s),  \eta(s))\Big)^2 + \frac{2 K_{M,N}(s)}{\big(\Vert Q_M\xi(s)\Vert_{L^2} + \Vert Q_N\eta(s)\Vert_{L^2}\big)^2} \bigg)\dd s
		\\
		&\ge \frac{2d}{T}\int_{t}^{t + T} \Big(H_{M,N}( \xi(s),  \eta(s))\Big)^2  \dd s 
		\\
		&\quad-  \frac{6}{T}\int_{t}^{t + T}  \Big(\gamma\big(\Vert  v(s)\Vert_{L^4}^2 + \Vert  \tv(s)\Vert_{L^4}^2 + \Vert\tu(s)\Vert_{L^4}^2\big) + A_2\Big)\Big(H_{M,N}( \xi(s),  \eta(s))\Big)\dd s.
	\end{align*}
	Using the Cauchy--Schwarz inequality, we obtain
	\begin{align*}
		&\frac{1 }{T}\int_{t}^{t + T}  \Big(\gamma\big(\Vert  v(s)\Vert_{L^4}^2 + \Vert  \tv(s)\Vert_{L^4}^2 + \Vert\tu(s)\Vert_{L^4}^2\big) + A_2\Big)\Big(H_{M,N}( \xi(s),  \eta(s))\Big)\dd s
		\\
		&\leq \bigg(\tfrac{1}{T}\int_t^{t+T}\Big(\gamma\big(\Vert  v(s)\Vert_{L^4}^2 + \Vert  \tv(s)\Vert_{L^4}^2 + \Vert\tu(s)\Vert_{L^4}^2\big) + A_2\Big)^2\dd s\bigg)^{\frac12} \bigg(\tfrac{1}{T}\int_{t}^{t + T} \Big(H_{M,N}( \xi(s),  \eta(s))\Big)^2\dd s\bigg)^{\frac12}
		\\
		&\leq \bigg(\tfrac{4}{T}\int_t^{t+T}\Big(\gamma^2\big(\Vert  v(s)\Vert_{L^4}^4 + \Vert  \tv(s)\Vert_{L^4}^4 + \Vert\tu(s)\Vert_{L^4}^4\big) + A_2^2\Big)\dd s\bigg)^{\frac12} \bigg(\tfrac{1}{T}\int_{t}^{t + T} \Big(H_{M,N}( \xi(s),  \eta(s))\Big)^2\dd s\bigg)^{\frac12}.
	\end{align*}
	Also, recall that since $\psi_0\in H_M,H_N$, the functions $Q_M u$ and $Q_N v$ are orthogonal to constants, hence $\int Q_M \xi = \int Q_N \eta = 0$. Thus, by the Poincaré inequality we have
	\begin{equation*}
		\Big(H_{M,N}( \xi(s),\eta(s))\Big)^2\ge  \min\{\lambda_{M}, \lambda_{N}\}.
	\end{equation*}
	It follows that
	\begin{align*}
		&\frac{1}{T}\int_{t}^{t + T}\bigg(2d\Big(H_{M,N}( \xi(s),  \eta(s))\Big)^2 + \frac{2 K_{M,N}(s)}{\big(\Vert Q_M\xi(s)\Vert_{L^2} + \Vert Q_N\eta(s)\Vert_{L^2}\big)^2} \bigg)\dd s
		\\
		&\ge \bigg(\frac{1}{T}\int_{t}^{t + T} \Big(H_{M,N}( \xi(s),  \eta(s))\Big)^2\dd s\bigg)^{\frac12}
		\times \Bigg[2d\bigg(\frac{1}{T}\int_{t}^{t + T} \Big(H_{M,N}( \xi(s),  \eta(s))\Big)^2  \dd s\bigg)^{\frac12} 
		\\
		&\hspace{150pt}-  6\bigg(\frac{4}{T}\int_t^{t+T}\Big(\gamma^2\big(\Vert  v(s)\Vert_{L^4}^4 + \Vert  \tv(s)\Vert_{L^4}^4 + \Vert\tu(s)\Vert_{L^4}^4\big) + A_2^2\Big)\dd s\bigg)^{\frac12} \Bigg]
		\\
		&{}
		\ge   \bigg(\frac{1}{T}\int_{t}^{t + T} \Big(H_{M,N}( \xi(s),  \eta(s))\Big)^2\dd s\bigg)^{\frac12}\times \delta,
	\end{align*}
	where
	\[
	\delta\coloneqq 2 d\Big(\min\{\lambda_{M}, \lambda_{N}\}\Big)^{\frac12} - 12\bigg(\frac{1}{T}\int_t^{t+T}\Big(\gamma^2\big(\Vert  v(s)\Vert_{L^4}^4 + \Vert  \tv(s)\Vert_{L^4}^4 + \Vert\tu(s)\Vert_{L^4}^4\big) + A_2^2\Big)\dd s\bigg)^{\frac12}.
	\]
	If $T>0$ and $M,N\in\NN$ are such that \eqref{eq:lambda_etimate1} holds, then  eventually $\delta >0$. Consequently,
	\begin{equation*}
			\liminf_{t\to\infty}\Bigg(\frac{1}{T}\int_{t}^{t + T}2d\Big(H_{M,N}( \xi(s),  \eta(s))\Big)^2 + \frac{2 K_{M,N}(s)}{\big(\Vert Q_M\xi(s)\Vert_{L^2} + \Vert Q_N\eta(s)\Vert_{L^2}\big)^2} \dd s\Bigg)>0,
	\end{equation*}
	in which case by \cref{lem:form}, the first $(M,N)$ modes are determining for the system \eqref{eq:gs}.
	
	That completes the proof.
\end{proof}

%
%
\section{Proof of  \texorpdfstring{\cref{thm:lambda_main}}{the main result}}\label{sec:lambda_main}
We split the proof of \cref{thm:lambda_main} into two lemmas, then conclude.
Let $(u,v)$ be any solution to the system \eqref{eq:gs}.
\begin{lemma}\label{lem:lambda_intermediate_1}
For all $t\ge 0$ and $T>0$, it holds that
\begin{equation}\label{eq:lambda_intermediate_1}
\frac{1}{d^2}\bigg(\limsup_{t\to\infty}\frac{1}{T}\int_{t}^{t + T}\Vert   u(s)\Vert^4_{L^4}\dd s\bigg)
\le \Big(\frac{d}{2 T}   + \lambda_1 d ^2\Big)\bigg(1 +\frac{d\lambda_1 \sqrt{2}}{c_1c_2}\bigg)^4\Gr^4.
\end{equation}
\end{lemma}
We note that \Cref{lem:lambda_intermediate_1} applies equally to $\tu$, since $(\tu, \tv)$ satisfies the same structural assumptions~\ref{tP1}-\ref{tP6}. Therefore, the same estimate holds with $u$ replaced by $\tu$.
\begin{proof}
We fix $t\ge 0$ and $T>0$. 
Standard calculations give
\begin{equation*}
	\frac12\frac{\dd}{\dd t} \Vert u(t)\Vert_{L^2}^2 + d_1\Vert\nabla u(t)\Vert^2_{L^2} = \int_D\Big(a_1 u(t,x) + b_1v(t,x) -\gamma u(t,x)v^2(t,x) + g_1(t,x)\Big)u(t,x)\dd x.
\end{equation*}
Then, using the property \ref{P5}, we obtain
\begin{align}\label{eq:intermediate1}
\frac12\frac{\dd}{\dd t} \Vert u(t)\Vert_{L^2}^2 + d_1\Vert\nabla u(t)\Vert^2_{L^2} \le c_1c_D\Vert u(t)\Vert_{L^2}.
\end{align}
Since we consider homogeneous Neumann boundary conditions, we use the Poincaré--Wirtinger inequality, see e.g.~\cite[Proposition III.2.39]{boyer2013mathematical},
\begin{equation*}
\Vert \nabla u(t)\Vert_{L^2}^2\ge \lambda_1 \Big(\Vert u(t)\Vert_{L^2}^2 - c_D^2\vert \bu(t)\vert^2\Big) \quad\mbox{ with }\quad \bu(t)\coloneqq \frac{1}{\vert D\vert}\int u(t).
\end{equation*}
Therefore,
\begin{align*}
\frac12\frac{\dd}{\dd t} \Vert u(t)\Vert_{L^2}^2 
+ d_1\lambda_1 \Vert u(t)\Vert^2_{L^2} 
\le c_1c_D\Vert u(t)\Vert_{L^2} 
+d_1\lambda_1c_D^2\vert \bu(t)\vert^2.
\end{align*}
Using the Young inequality,
\begin{align*}
	\frac{\dd}{\dd t} \Vert u(t)\Vert_{L^2}^2 
	+  d_1\lambda_1 \Vert u(t)\Vert^2_{L^2} 
	\le \frac{(c_1c_D)^2}{ d_1\lambda_1}  
	+2d_1\lambda_1c_D^2\vert \bu(t)\vert^2.
\end{align*}
It follows that
\begin{equation*}
	\limsup_{t \to \infty}\Vert u(t)\Vert^2_{L^2} 
	\leq  \Big(\frac{c_1c_D}{ d_1\lambda_1} \Big)^2 
	+2c_D^2 \limsup_{t \to \infty}\vert \bu(t)\vert^2.
\end{equation*}
Next, we derive an estimate for $\vert \bu(t)\vert$ that appears in the right-hand side of the above inequality.  Integrating the equation for $u$, i.e., the first equation in~\eqref{eq:gs}, over $D$ and using again the property~\ref{P5}, we obtain
\begin{equation*}
\frac{\dd}{\dd t} \bu(t) =\frac{1}{\vert D\vert}\int_D \big( a_1 u + b_1 v - \gamma uv^2 + g_1\big) \leq c_1 - c_1 c_2 \bu(t).
\end{equation*}
Consider the associated ODE, 
$
\frac{\dd}{\dd t} z(t) + c_1c_2 z(t) = c_1,
$
whose solution is given by
$
z(t) = z(0)\mathrm{e}^{-c_1c_2 t} + \frac{1}{c_2}(1 - \mathrm{e}^{-c_1c_2 t}).
$
Since $\bu$ satisfies to $\frac{\dd}{\dd t} \bu(t) + c_1c_2 \bu(t) \leq c_1$, thus, by comparison principle
\[
\bu(t) \le \bu(0)\mathrm{e}^{-c_1c_2 t} + \frac{1}{c_2}(1 - \mathrm{e}{-c_1c_2 t}).
\]
Thus, we arrive at
\begin{equation*}
	\limsup_{t \to \infty}\Vert u(t)\Vert^2_{L^2} 
	\leq  \Big(\frac{c_1c_D}{ d_1\lambda_1} \Big)^2 
	+\frac{2 c_D^2}{c_2^2}.
\end{equation*}
Note that
\begin{equation*}
	\Big(\frac{c_1c_D}{ d_1\lambda_1} \Big)^2 
	+\frac{2 c_D^2}{c_2^2} 
	\leq 
	 \bigg(   \frac{c_1c_D}{ d_1\lambda_1}  + \frac{c_D\sqrt{2}}{c_2} \bigg)^2.
\end{equation*}
On recalling that $\Gr \coloneqq F/(d^2\lambda_1)$, with $F \coloneqq c_1 + c_4c_D + g^* + A_2^2 + c_1c_D$, we rearrange the right-hand side to get
\begin{equation*}
	\limsup_{t \to \infty}\Vert u(t)\Vert^2_{L^2} 
	\leq \bigg(   d\Big(\frac{c_1c_D}{ d^2\lambda_1}\Big)  + \frac{d^2\lambda_1 \sqrt{2}}{c_1c_2}\Big(\frac{c_1c_D}{d^2\lambda_1}\Big) \bigg)^2
	\leq  d^2\bigg(1 +\frac{d\lambda_1 \sqrt{2}}{c_1c_2}\bigg)^2\Gr^2.
\end{equation*}

Returning to~\eqref{eq:intermediate1}. Integrating over $[t, t+ T]$, dividing by $T$, and taking $\limsup_{t \to \infty}$, we obtain
\begin{align*}
\bigg(\limsup_{t\to\infty}\frac{1}{T}\int_{t}^{t + T}\Vert  \nabla u(s)\Vert^2_{L^2}\dd s\bigg)
&\le \frac{d}{2T}\bigg(1 +\frac{d\lambda_1 \sqrt{2}}{c_1c_2}\bigg)^2\Gr^2  + c_1c_D\bigg(1 +\frac{d\lambda_1 \sqrt{2}}{c_1c_2}\bigg)\Gr .
\end{align*}
Again, by rearranging the constants, we obtain
\begin{align}\label{eq:lambda_estimate_1}
	\bigg(\limsup_{t\to\infty}\frac{1}{T}\int_{t}^{t + T}\Vert  \nabla u(s)\Vert^2_{L^2}\dd s\bigg)
	\le \Big(\frac{d}{2T}  + d^2\lambda_1\Big)\bigg(1 +\frac{d\lambda_1 \sqrt{2}}{c_1c_2}\bigg)^2  \Gr^2 .
\end{align}

Finally, by the Ladyzhenskaya inequality in two dimensions, see e.g.~\cite[Proposition III.2.35]{boyer2013mathematical}, 
\begin{equation*}
\Vert   u(s)\Vert^4_{L^{4}}\leq \Vert   u(s)\Vert^{2}_{L^2} \Vert   \nabla u(s)\Vert^{2}_{L^2}.
\end{equation*}
Thus, we have
\begin{align*}
\frac{1}{d^2}\bigg(\limsup_{t\to\infty}\frac{1}{T}\int_{t}^{t + T}\Vert   u(s)\Vert^4_{L^4}\dd s\bigg)
&\leq \frac{1}{d^2}\bigg(\limsup_{t\to\infty}\Vert   u(t)\Vert_{L^2}\bigg)^2\bigg( \limsup_{t\to\infty}\frac{1}{T}\int_{t}^{t + T}\Vert   \nabla u(s)\Vert^2_{L^2}\dd s\bigg)
\\
&\leq  \Big(\frac{d}{2T}  + d^2\lambda_1\Big)\bigg(1 +\frac{d\lambda_1 \sqrt{2}}{c_1c_2}\bigg)^4\Gr^4.
\end{align*}

This establishes \eqref{eq:lambda_intermediate_1} and completes the proof of the lemma.
\end{proof}

\begin{lemma}\label{lem:lambda_intermediate_2}
	For all $t\ge 0$ and $T>0$, it holds that
	\begin{equation}\label{eq:lambda_intermediate_2}
		\frac 1{d^2}\bigg( \limsup_{t \to \infty}\frac{1}{T}\int_{t}^{t + T} \Vert   v(s)\Vert_{L^4}^4 \dd s \bigg)
		\leq d^5\lambda_1^4 \bigg(\frac{c_3 }{(1+\alpha)^2} + \frac{1}{T}\bigg) \bigg(\frac{1+\alpha}{c_3}\bigg)^4\Gr^4,
	\end{equation}
	with $\alpha\coloneqq  {(d_1^2+ d_2^2)}/{(2d_1d_2)}$.
\end{lemma}
The estimate in \cref{lem:lambda_intermediate_2} holds with $v$ replaced by $\tv$.  
\begin{proof}
	We fix $t\ge 0$, and $T>0$.
	Observe that $w\coloneqq u + v$ satisfies to
	\begin{equation*}
		\partial_t w(t,x) = d_1\Delta u(t,x) + d_2\Delta v(t,x) + (a_1 +a_2) u(t,x) + (b_1 + b_2) v(t,x) + (g_1+  g_2)(t,x).
	\end{equation*}
	The additional damping assumption \eqref{eq:extra_assumption} implies that
	\begin{align*}
		&\frac12\frac{\dd}{\dd t}\Vert w(t)\Vert_{L^2}^2 + d_1\la\nabla u(t),\nabla w(t)\ra + d_2\la\nabla v(t),\nabla w(t)\ra + c_3\Vert w(t)\Vert_{L^2}^2
		\\
		&\leq c_4\Vert w(t) \Vert_{L^1} + \Vert (g_1+g_2)(t)\Vert_{L^2}\Vert w(t)\Vert_{L^2}.
	\end{align*}
	Since $w=u+v$, we have
	\begin{equation*}
		d_1\la\nabla u(t),\nabla w(t)\ra + d_2\la\nabla v(t),\nabla w(t)\ra 
		= d_1\Vert \nabla u(t)\Vert_{L^2}^2
		+ d_2\Vert \nabla v(t)\Vert_{L^2}^2
		+ (d_1 + d_2)\la\nabla u(t),\nabla v(t)\ra.
	\end{equation*}
	By Young inequality,
	\begin{equation*}
		(d_1 + d_2) \vert\la\nabla u(t),\nabla v(t)\ra\vert 
		\leq \frac{d_2}{2}\Vert\nabla v(t)\Vert_{L^2}^2 
		+ \frac{(d_1+ d_2)^2}{2d_2}\Vert \nabla u(t)\Vert_{L^2}^2.
	\end{equation*}
	Recall $F \coloneqq c_1 + c_4c_D + g^* + A_2^2 + c_1c_D$. Thus, 
	\begin{align}\label{eq:w}
		\frac12\frac{\dd}{\dd t}\Vert w(t)\Vert_{L^2}^2 
		+ \frac{d_2}2\Vert \nabla v(t)\Vert_{L^2}^2
		+ c_3\Vert w(t)\Vert_{L^2}^2 
		\leq (c_4c_D+ g^*)\Vert w(t)\Vert_{L^2}  
		+ {c_{5}}\Vert \nabla u(t)\Vert_{L^2}^2,
	\end{align}
	with $c_{5}\coloneqq   {(d_1^2+ d_2^2)}/{(2d_2)}$.

	Since $u,v\ge 0$, we have $0\le u \le w = u+ v$ and from \eqref{eq:intermediate1},  
	\begin{align*} 
		\frac12\frac{\dd}{\dd t} \Vert u(t)\Vert_{L^2}^2 + d_1\Vert\nabla u(t)\Vert^2_{L^2} \le c_1c_D\Vert w(t)\Vert_{L^2}.
	\end{align*}
	Set $\alpha\coloneqq c_5/d_1$. Multiplying the previous inequality by $\alpha$, we obtain
	\begin{align*} 
		 c_5\Vert\nabla u(t)\Vert^2_{L^2} \le \alpha c_1c_D\Vert w(t)\Vert_{L^2} - \frac\alpha 2\frac{\dd}{\dd t} \Vert u(t)\Vert_{L^2}^2 .
	\end{align*}
	Substituting this into \eqref{eq:w}, we get
	\begin{equation}\label{eq:w1}
		\frac12\frac{\dd}{\dd t}\Big(\Vert w(t)\Vert_{L^2}^2 + \alpha\Vert u(t)\Vert_{L^2}^2 \Big)
		+ \frac{d_2}2\Vert \nabla v(t)\Vert_{L^2}^2
		+ {c_3}\Vert w(t)\Vert_{L^2}^2 
		\leq F \Vert w(t)\Vert_{L^2}.
	\end{equation}
	Define $y(t)\coloneqq \Vert w(t)\Vert_{L^2}^2 + \alpha\Vert u(t)\Vert_{L^2}^2$. Because $0\leq u \leq w$, we have
	\[
	\Vert w(t)\Vert_{L^2}^2 \leq y(t) \leq (1+\alpha) \Vert w(t)\Vert_{L^2}^2.
	\]
	Thus, dropping the nonnegative term involving $\nabla v$ and using the Young inequality, we obtain
	\begin{equation*}
		 \frac{\dd}{\dd t} y(t) + a y(t) \leq \frac{2F^2}{a},
	\end{equation*}
	where $a \coloneqq 2 c_3/(1+\alpha)$. Multiplying the inequality by $\mathrm{e}^{at}$, we get
	\begin{equation*}
		\frac{\dd}{\dd t}\Big( \mathrm{e}^{at} y(t)\Big) \leq \frac{2F^2}{a}\mathrm{e}^{at}.
	\end{equation*}
	Integrating from 0 to $t$,  we find
	\begin{equation*}
		y(t)\leq y(0)\mathrm{e}^{-at} + \frac{2F^2}{a^2}(1 - \mathrm{e}^{-at}).
	\end{equation*}
	Letting $t\to\infty$ and substitute $a$, we obtain
	\begin{equation*}
		\limsup_{t \to \infty} y(t) \le \frac{2F^2}{a^2} \le \bigg(\frac{1+\alpha}{c_3}\bigg)^2F^2.
	\end{equation*}
	Recall that $\Vert w(t)\Vert_{L^2}^2\leq y(t)$ and $0\leq v\leq w$. Thus, we also get
	\begin{equation*}
		\limsup_{t \to \infty} \Vert v(t)\Vert_{L^2}^2\le  d^4\lambda_1^2\bigg(\frac{1+\alpha}{c_3}\bigg)^2\Gr^2.
	\end{equation*}
	 
	 Now return to the combined inequality \eqref{eq:w1}, where we apply the Young inequality to $F \Vert w(t)\Vert_{L^2}$. Thus, we have
	\begin{equation*}
		\frac12 \frac{\dd}{\dd t} y(t) + \frac{d_2}2\Vert \nabla v(t)\Vert_{L^2}^2
		+ \frac {c_3}2\Vert w(t)\Vert_{L^2}^2 
		\leq \frac 1{2c_3}F^2.
	\end{equation*}
	Integrating over $[t, t + T]$, and taking the $\limsup$, gives
	\begin{equation*}
		\limsup_{t \to \infty}\bigg(\frac1T \int_{t}^{t + T} \Vert \nabla v(s)\Vert_{L^2}^2 \dd s\bigg) 
		\leq \frac{F^2}{c_3 d} + \limsup_{t \to \infty} \frac1T y(t)
		= \frac{F^2}{c_3 d} + \frac{1}{dT}\bigg(\frac{1+\alpha}{c_3}\bigg)^2 F^2.
	\end{equation*}
	Rearranging the constants, we obtain
	\begin{equation*}
		\limsup_{t \to \infty}\bigg(\frac1T \int_{t}^{t + T} \Vert \nabla v(s)\Vert_{L^2}^2 \dd s\bigg) 
		\leq d^3\lambda_1^2\bigg(\frac{c_3 }{(1+\alpha)^2} + \frac{1}{T}\bigg)\bigg(\frac{1+\alpha}{c_3}\bigg)^2 \Gr^2.
	\end{equation*}
	
	Finally, by the Ladyzhenskaya inequality in two dimensions,
	\begin{equation*}
		\Vert   v(s)\Vert^4_{L^{4}}\leq \Vert   v(s)\Vert^{2}_{L^2} \Vert   \nabla v(s)\Vert^{2}_{L^2}.
	\end{equation*}
	Thus, we have
	\begin{align*}
		\frac{1}{d^2}\bigg(\limsup_{t\to\infty}\frac{1}{T}\int_{t}^{t + T}\Vert   v(s)\Vert^4_{L^4}\dd s\bigg)
		&\leq \frac{1}{d^2}\Big(\limsup_{t\to\infty}\Vert   v(t)\Vert_{L^2}\Big)^2\bigg( \limsup_{t\to\infty}\frac{1}{T}\int_{t}^{t + T}\Vert   \nabla v(s)\Vert^2_{L^2}\dd s\bigg)
		\\
		&\leq  d^5\lambda_1^4 \bigg(\frac{c_3 }{(1+\alpha)^2} + \frac{1}{T}\bigg) \bigg(\frac{1+\alpha}{c_3}\bigg)^4\Gr^4.
	\end{align*}
	
	This completes the proof of the lemma.
\end{proof}
\begin{proof}[Proof of \texorpdfstring{\cref{thm:lambda_main}}]
	We fix $t\ge 0$ and $T>0$.
	The goal is to verify condition \eqref{eq:lambda_etimate1} in \Cref{lem:lambda_etimate1}.
	Combining the estimates \eqref{eq:lambda_intermediate_1}-\eqref{eq:lambda_intermediate_2} of \Cref{lem:lambda_intermediate_1,lem:lambda_intermediate_2}, we obtain
	\begin{align*}
		&\frac{A_2^2}{d^2}
		+\frac{\gamma^2}{d^2}\bigg(\limsup_{t\to\infty}\frac{1}{T}\int_{t}^{t + T}\Big(\Vert   \tu(s)\Vert^4_{L^4} + \Vert   v(s)\Vert^4_{L^4} + \Vert   \tv(s)\Vert^4_{L^4}\Big)\dd s\bigg)
		\\
		&\leq \lambda_1\Gr+ \gamma^2\Big(\frac{d}{2 T}   + \lambda_1 d ^2\Big)\bigg(1 +\frac{d\lambda_1 \sqrt{2}}{c_1c_2}\bigg)^4\Gr^4 + 
		2\gamma^2 d^5\lambda_1^4 \bigg(\frac{c_3 }{(1+\alpha)^2} + \frac{1}{T}\bigg) \bigg(\frac{1+\alpha}{c_3}\bigg)^4\Gr^4.
	\end{align*}

	 It is enough to take 
	 \begin{equation*}
	 	T > \frac{\gamma^2\bigg[ \frac d2 \Big( 1 + \frac{d\lambda_1\sqrt{2}}{c_1c_2}\Big)^4  + 2d^5\lambda_1^4\Big(\frac{1 + \alpha}{c_3}\Big)^4\bigg]\Gr^4}{ \frac1{36}\min\{\lambda_{M}, \lambda_{N}\} -\lambda_1\Gr - \lambda_1 \gamma^2 d^2 \Big(1 + \frac{d\lambda_1 \sqrt{2}}{c_1c_2}\Big)^4\Gr^4 - \frac{2\gamma^2 d^5\lambda_1^4}{c_3^3}(1 + \alpha)^2\Gr^4}.
	 \end{equation*}
	 This is possible provided that
	 \begin{equation*}
	 	\frac1{36}\min\{\lambda_{M}, \lambda_{N}\} > \lambda_1\Gr + \lambda_1 \gamma^2 d^2 \bigg(1 + \frac{d\lambda_1 \sqrt{2}}{c_1c_2}\bigg)^4\Gr^4 + \frac{2\gamma^2 d^5\lambda_1^4}{c_3^3}(1 + \alpha)^2\Gr^4.
	 \end{equation*}
	 For such $T$, by \Cref{lem:lambda_etimate1} the first $(M, N)$ modes are determining for the pair of solutions $(u, v)$ to the system \eqref{eq:gs}. This completes the proof of the main result.
	 
\end{proof}

%
%
\section{Numerical illustration of determining modes}\label{sec:num_exp}

%
%
\begin{figure}[t!]
	\begin{subfigure}[t]{1\textwidth}
		\centering
		\makebox[.1\textwidth][c]{}
		\hspace{0pt}
		\makebox[.1\textwidth][c]{\small  $0$ tu}
		\hspace{20pt}
		\makebox[.1\textwidth][c]{\small  $500$ tu}
		\hspace{20pt}
		\makebox[.1\textwidth][c]{\small  $1500$ tu}
		\hspace{20pt}
		\makebox[.1\textwidth][c]{\small  $2000$ tu}
		\hspace{20pt}
		\makebox[.1\textwidth][c]{\small  $4000$ tu}
		\hspace{20pt}
		\par\smallskip

		\begin{minipage}[t]{0.03\textwidth}
			\rotatebox{90}{\small{Reference}   $u$}
		\end{minipage}
		\begin{minipage}[t]{0.03\textwidth}
			\rotatebox{90}{256 modes}
		\end{minipage}
		\includegraphics[scale = 1]{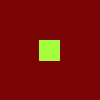}
		\includegraphics[scale = 1]{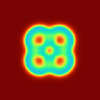}
		\includegraphics[scale = 1]{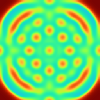}
		\includegraphics[scale = 1]{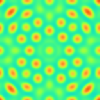}
		\includegraphics[scale = 1]{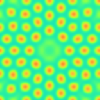}
	\end{subfigure}
	\\[5pt]
	
	\begin{subfigure}[t]{1\textwidth}
		\centering
		\begin{minipage}[t]{0.03\textwidth}
			\rotatebox{90}{\small{Reference}   $v$}
		\end{minipage}
		\begin{minipage}[t]{0.03\textwidth}
			\rotatebox{90}{256 modes}
		\end{minipage}
		\includegraphics[scale = 1]{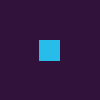}
		\includegraphics[scale = 1]{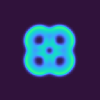}
		\includegraphics[scale = 1]{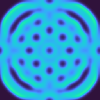}
		\includegraphics[scale = 1]{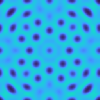}
		\includegraphics[scale = 1]{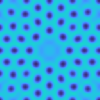}
	\end{subfigure}
	\hfill	
	\\[5pt]
	\begin{subfigure}[c]{1.\textwidth}
		\hspace{161pt}
		\includegraphics[scale = 1]{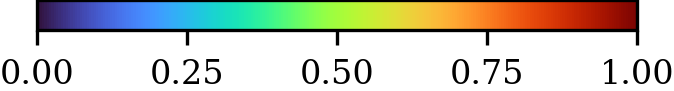}
	\end{subfigure}
	\hfill
	
	\caption{Snapshots of the reference solution: Gray--Scott model with $F = 0.039$, $k = 0.058$ (Holes pattern). Colormap shows concentration $u$ or $v$.}
	\label{fig:holes}
\end{figure}

%
%
\begin{figure}
	\begin{subfigure}[t]{1\textwidth}
		
		\centering
		\makebox[.1\textwidth][c]{}
		\hspace{0pt}
		\makebox[.1\textwidth][c]{\small  $0$ tu}
		\hspace{20pt}
		\makebox[.1\textwidth][c]{\small  $12.5$ tu}
		\hspace{20pt}
		\makebox[.1\textwidth][c]{\small  $25$ tu}
		\hspace{20pt}
		\makebox[.1\textwidth][c]{\small  $50$ tu}
		\hspace{20pt}
		\makebox[.1\textwidth][c]{\small  $100$ tu}
		\hspace{20pt}
		\par\smallskip

		\begin{minipage}[t]{0.03\textwidth}
			\rotatebox{90}{\small{Reference}   $u$}
		\end{minipage}
		\begin{minipage}[t]{0.03\textwidth}
			\rotatebox{90}{256 modes}
		\end{minipage}
		\includegraphics[scale = 1]{u0000}
		\includegraphics[scale = 1]{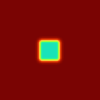}
		\includegraphics[scale = 1]{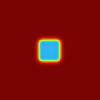}
		\includegraphics[scale = 1]{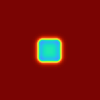}
		\includegraphics[scale = 1]{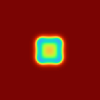}
	\end{subfigure}
	\\[5pt]
	
	\begin{subfigure}[t]{1\textwidth}
		\centering
		\begin{minipage}[t]{0.03\textwidth}
			\rotatebox{90}{\small{Assimilated}   $\tu$}
		\end{minipage}
		\begin{minipage}[t]{0.03\textwidth}
			\rotatebox{90}{256 modes}
		\end{minipage}
		\includegraphics[scale = 1]{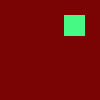}
		\includegraphics[scale = 1]{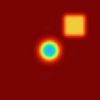}
		\includegraphics[scale = 1]{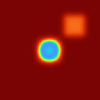}
		\includegraphics[scale = 1]{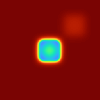}
		\includegraphics[scale = 1]{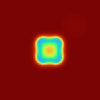}
	\end{subfigure}
	\hfill	
	\\[5pt]
	\begin{subfigure}[c]{1.\textwidth}
		\hspace{161pt}
		\includegraphics[scale = 1]{colorbar_turbo}
	\end{subfigure}
	\hfill
	
	\caption{Snapshots of the reference solution (first row) and the assimilated solution (second row): Gray--Scott model with $F = 0.039$, $k = 0.058$ (Holes pattern).  The number of observed modes is $M = 12$. Colormap shows concentration $u$ or $\tu$.}
	\label{fig:holes_pert}
\end{figure}

\subsection{The reference system}

To illustrate the determining mode condition in Theorem~\ref{thm:lambda_main}, we consider a particular case of the system \eqref{eq:gs}, also called the Gray--Scott model, on the unit square $D\coloneqq [0,1]\times[0,1]$ that is given by
\begin{equation}\label{eq:gs_ref}
	\left\{
	\begin{aligned}
		\partial_t{u} &= d_1\Delta u  -uv^2 + F(1-u),
		\\
		\partial_t{v} &= d_2\Delta v  + uv^2 - (F+k)v,
	\end{aligned}
	\right.
\end{equation}
with parameters $d_1= 1.6\times 10^{-5}$, $d_2 = 0.8\times 10^{-5}$ and homogeneous Neumann boundary conditions for both species.

Since the system~\eqref{eq:gs_ref} has no closed-form solution, we compute the dynamics numerically using a semi-implicit scheme combined with a pseudospectral Fourier method. 
The simulation is performed on a 256x256 cell-centered grid over $D$ and a fixed time step $\Delta t = 0.5$.

We fix the parameters to $F = 0.039$ and $k = 0.058$, and initialize the system at the  steady state $u \equiv 1$, $v \equiv 0$. 
To trigger pattern formation, we introduce a localized perturbation by setting $u \equiv 0.50$ and $v \equiv 0.25$ in a central square of side length $0.2$. As seen in \Cref{fig:holes}, the system evolves toward a characteristic pattern known in the literature as \textit{holes}.

The numerical solution of \eqref{eq:gs_ref} is taken as the \textit{reference solution}, whose dynamics will be compared with a second system that we refer to as an assimilated system.

\subsection{The assimilated system}

In general, the theory of determining modes does not specify how to construct the forces $g_1,g_2$ and $\tg_1, \tg_2$.  Because the conditions
\[
\limsup_{t\to\infty}  \Big(\Vert (g_1-\tg_1)(t)\Vert_{L^2}+ \Vert (g_2-\tg_2)(t)\Vert_{L^2}\Big) = 0 \mbox{ and } \lim_{t\to\infty}\Big(\Vert P_{M}(u - \tu)(t)\Vert_{L^2} +  \Vert P_{N}(v - \tv)(t)\Vert_{L^2}\Big)= 0
\]
must hold, these forces must be defined appropriately.

Typically they are defined as  nonlinear feedbacks like in the work of Olson and Titi~\cite{olson2003determining} or linear feedbacks like in the work of Azouani, Olson, and Titi~\cite{azouani2014continuous}.

To illustrate the determining mode condition in \Cref{thm:lambda_main}, we define a continuous data assimilation algorithm with linear feedback adapted to the problem~\eqref{eq:gs_ref} using observations consisting of the first $M$ modes.

Let $(u, v)$ be the numerical solution of \eqref{eq:gs_ref}. 
We introduce an \textit{assimilated system} $(\tu, \tv)$, defined by
\begin{equation}\label{eq:nudgedgs}
\left\{
\begin{aligned}
\partial_t{\tu} &= d_1\Delta \tu  -\tu\tv^2 + F(1-\tu) + \sigma u\xi_1 + 2 P_M(   u -  \tu),
\\
\partial_t{\tv} &= d_2\Delta \tv  + \tu\tv^2 - (F+k)\tv + \sigma v\xi_2 + 2 P_M(  v -   \tv).
\end{aligned}
\right.
\end{equation}
where $P_M$ denotes the projection onto the first $M$ modes.  $\xi_1 = \xi_1(t,x)$ and $\xi_2 = \xi_2(t,x)$ are filtered Gaussian random field supported in the finite time interval $[0, 2000]$. The filtering is given in Fourier space by $\hat\xi_j(k) = (5 + \vert k\vert^2)^{-\beta}Z_k$, where $Z_k$ are independent complex Gaussian variables, and $\beta>0$ controls the spatial regularity of the noise. Larger values of $\beta$ produce smoother noise fields.

The term $P_M(   u -  \tu)$ (and similarly for $v$) is a feedback that acts only on the first $M$ modes and serves to enforce agreement on the observed low modes.

We perform four experiments to demonstrate numerically the result in \Cref{thm:lambda_main}:
\begin{itemize}
\item Experiment 1: $\sigma = 0$ (no noise)
\item Experiment 2: $\sigma = 0.25$, $\beta = 0.5$ (rough noise)
\item Experiment 3: $\sigma = 0.25$, $\beta = 1.0$ (moderate noise)
\item Experiment 4: $\sigma = 0.25$, $\beta = 1.5$ (smooth noise)
\end{itemize}

The assimilated system~\eqref{eq:nudgedgs} starts with different initial conditions from those of the reference solution. Specifically, we take $\tilde u \equiv 1$ and $\tilde v \equiv 0$ everywhere, and introduce a localized perturbation by setting $\tilde u \equiv 0.40$ and $\tilde v \equiv 0.15$ in a square region of side length $0.2$, centered at $(0.25,0.25)$, see \Cref{fig:holes_pert}.
The snapshots in \Cref{fig:holes_pert} also illustrates the synchronization process, showing that the assimilated solution rapidly recovers the spatial structure of the reference solution and becomes visually indistinguishable after 100 time units (tu).

\begin{figure}[t!]
	\centering
	{
		\subcaption*{\bf Errors between $P_Mu$ and $P_M\tu$}
		\includegraphics[scale=0.4]{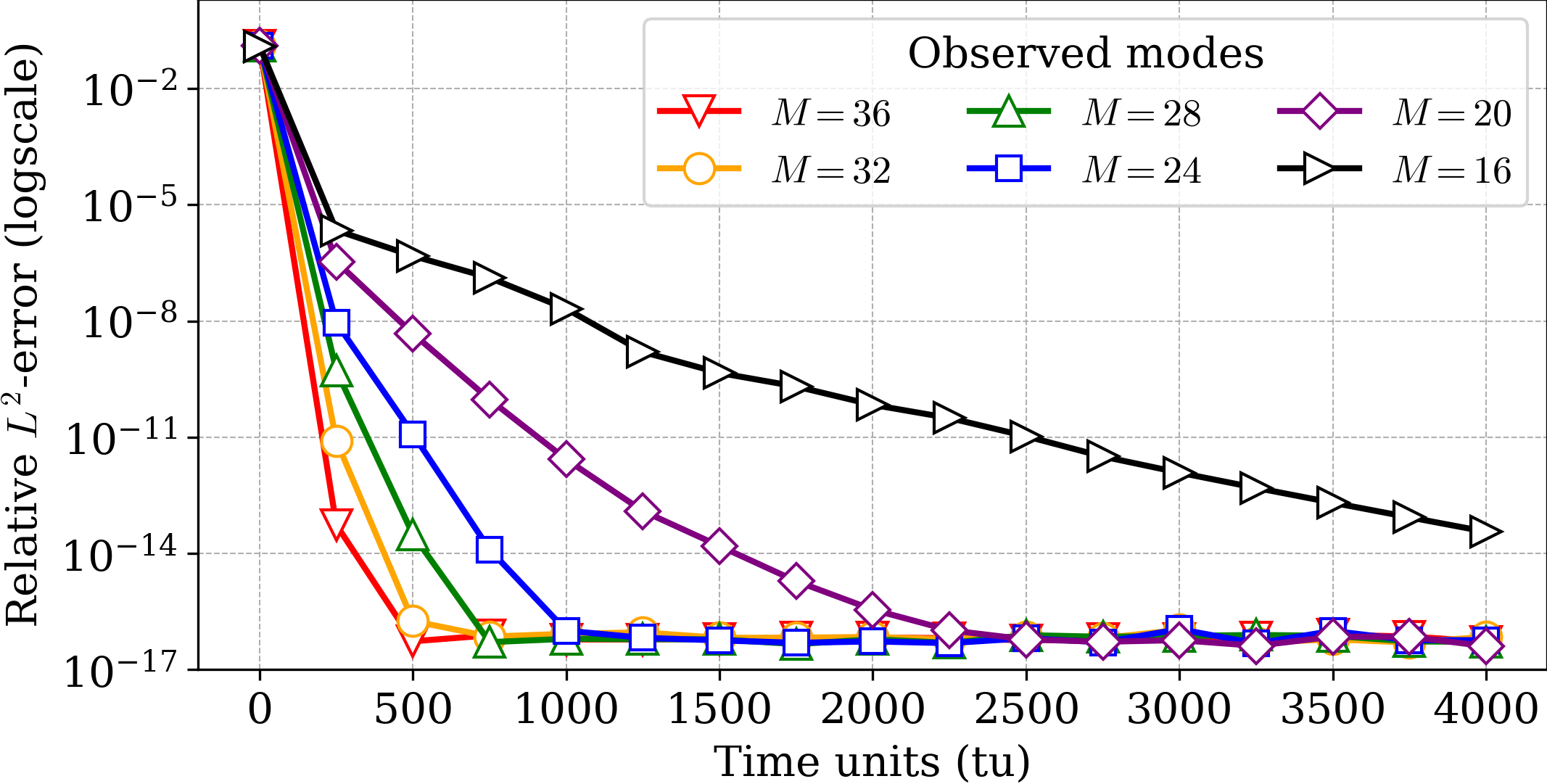}
		\;
		\includegraphics[scale=0.4]{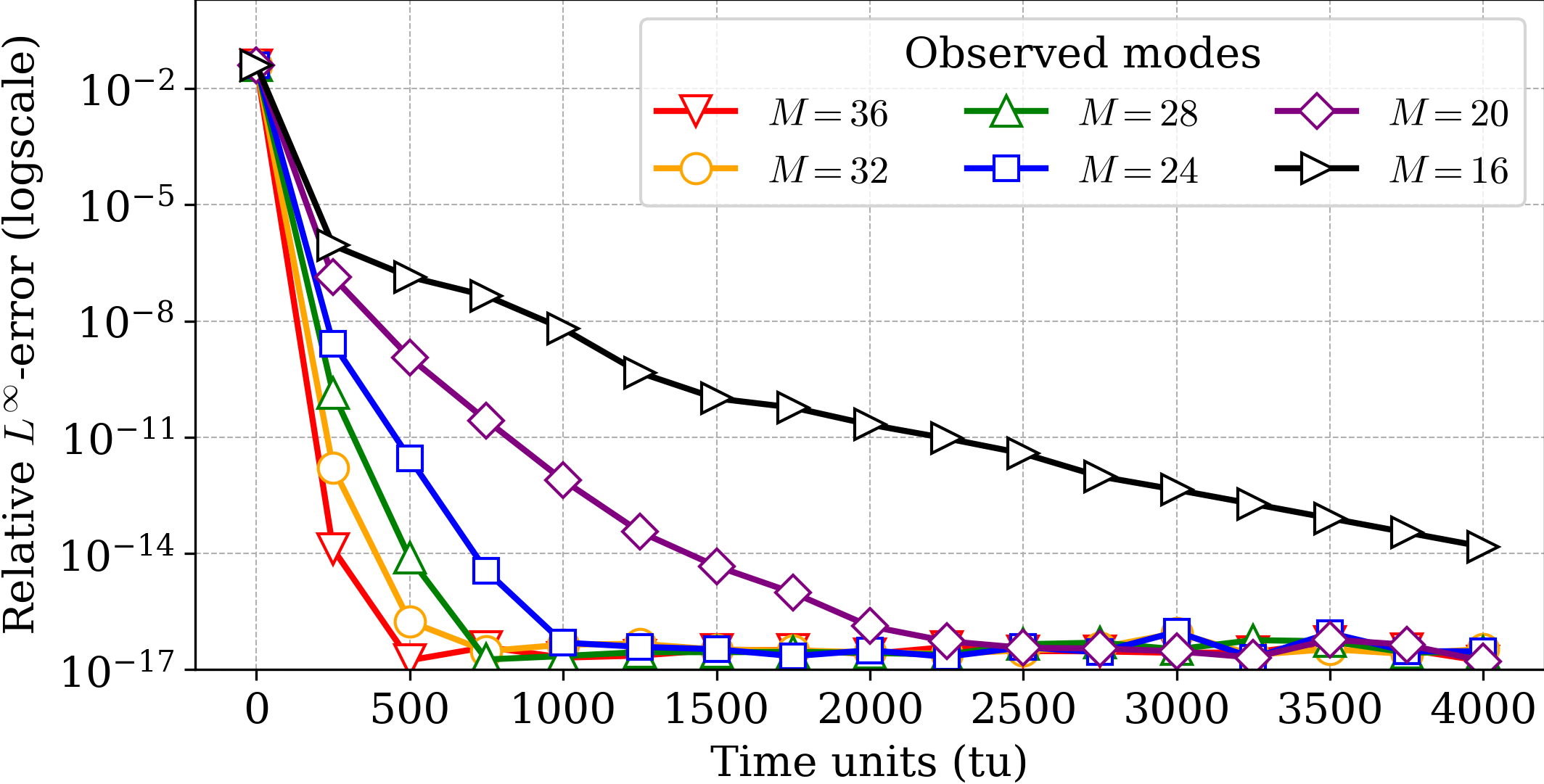}
		
		\subcaption*{\bf Errors between $u$ and $\tu$}
		\includegraphics[scale=0.4]{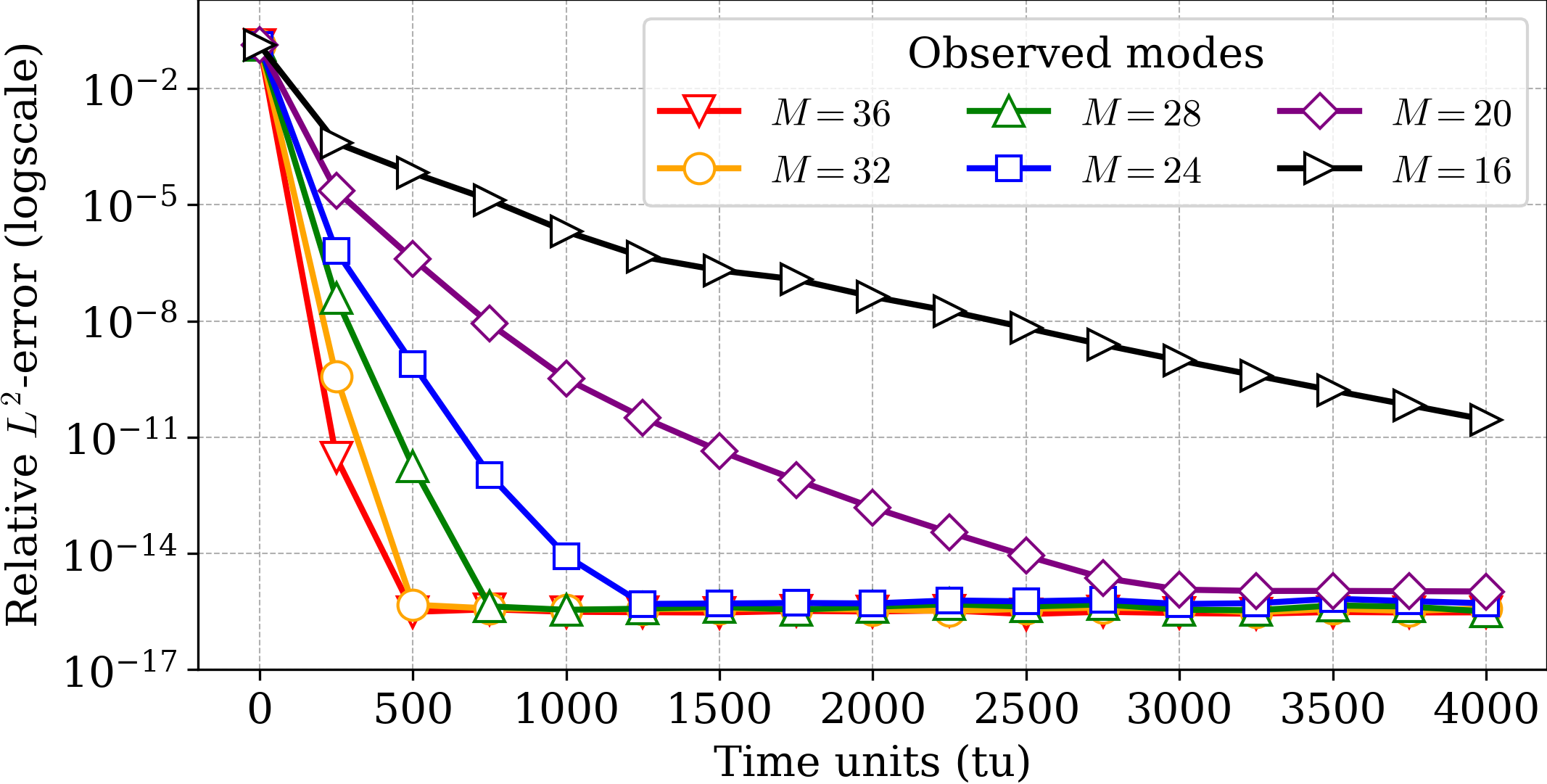}
		\;
		\includegraphics[scale=0.4]{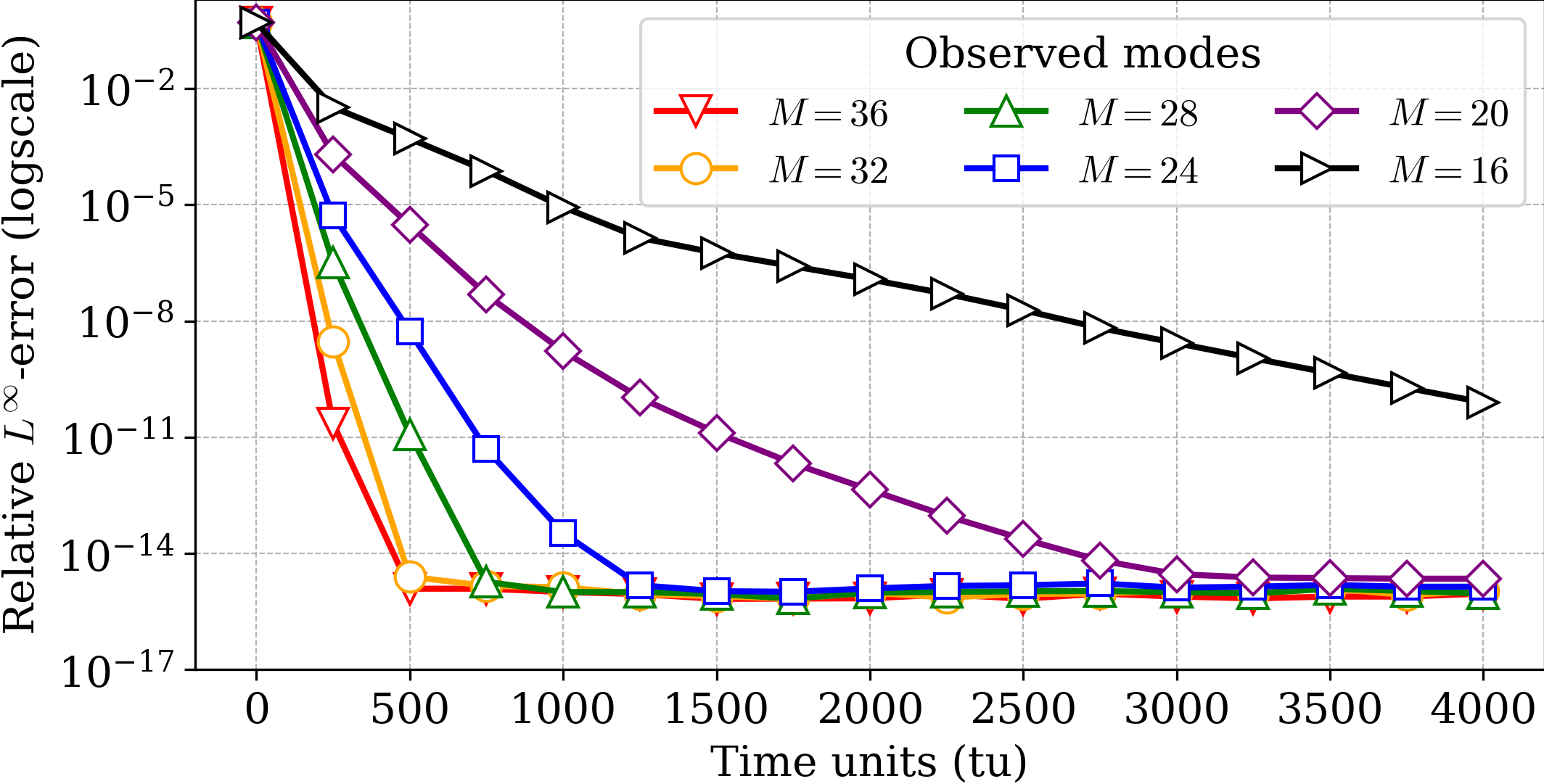}
	}
	
	\caption{\textbf{Experiment 1.} Evolution of the relative $L^2$-error (left) and the $L^\infty$-error (right) between the reference solution and the assimilated solution.}
	\label{fig:experiment1_errors}
\end{figure}

We monitor the convergence between the reference and assimilated solutions by tracking the relative error in both the $L^2$-norm ($\Vert u(t) - \tu(t)\Vert_{L^2}/\Vert u(t)\Vert_{L^2}$), and $L^\infty$-norms ($\Vert u(t) - \tu(t)\Vert_{L^\infty}/\Vert u(t)\Vert_{L^\infty}$) over time.

\Cref{fig:experiment1_errors,fig:experiment2_errors,fig:experiment3_errors,fig:experiment4_errors} illustrate the convergence behavior of the assimilated system under deterministic and stochastic forcing. In the deterministic case, see \Cref{fig:experiment1_errors}, the error decays exponentially to zero once a sufficient number of modes is used, and synchronization, i.e., convergence to machine precision ($\approx 10^{-16}$), is achieved for $M \ge 24$. This confirms that only a small number of low Fourier modes is sufficient to determine the full dynamics.

In the presence of stochastic forcing, see \Cref{fig:experiment2_errors,fig:experiment3_errors,fig:experiment4_errors}, the convergence behavior changes due to the persistent influence of noise. In particular, an initial transient phase is observed during which the error remains nearly constant. After this phase, the error begins to decay, and synchronization is eventually achieved provided that a sufficiently large number of modes is used. For rough noise, see \Cref{fig:experiment2_errors}, this requires approximately $M\ge 32$, indicating that more modes are needed to compensate for the increased excitation of higher frequencies.

As the noise becomes smoother, see \Cref{fig:experiment3_errors,fig:experiment4_errors}, the convergence behavior improves significantly. The transient phase shortens, and synchronization is achieved for smaller values of $M$, with a threshold returning to $M\ge 24$ for sufficiently regular noise. This shows that the number of determining modes depends on the spectral properties of the noise, including both its regularity and intensity. Rougher and stronger noise excites higher modes and requires more observed modes, whereas smoother and weaker noise mainly affects large scales, which are more effectively controlled by the feedback mechanism.

\begin{figure}[t!]
	\centering
	{
		\subcaption*{\bf Errors between $P_Mu$ and $P_M\tu$}
		\includegraphics[scale=0.4]{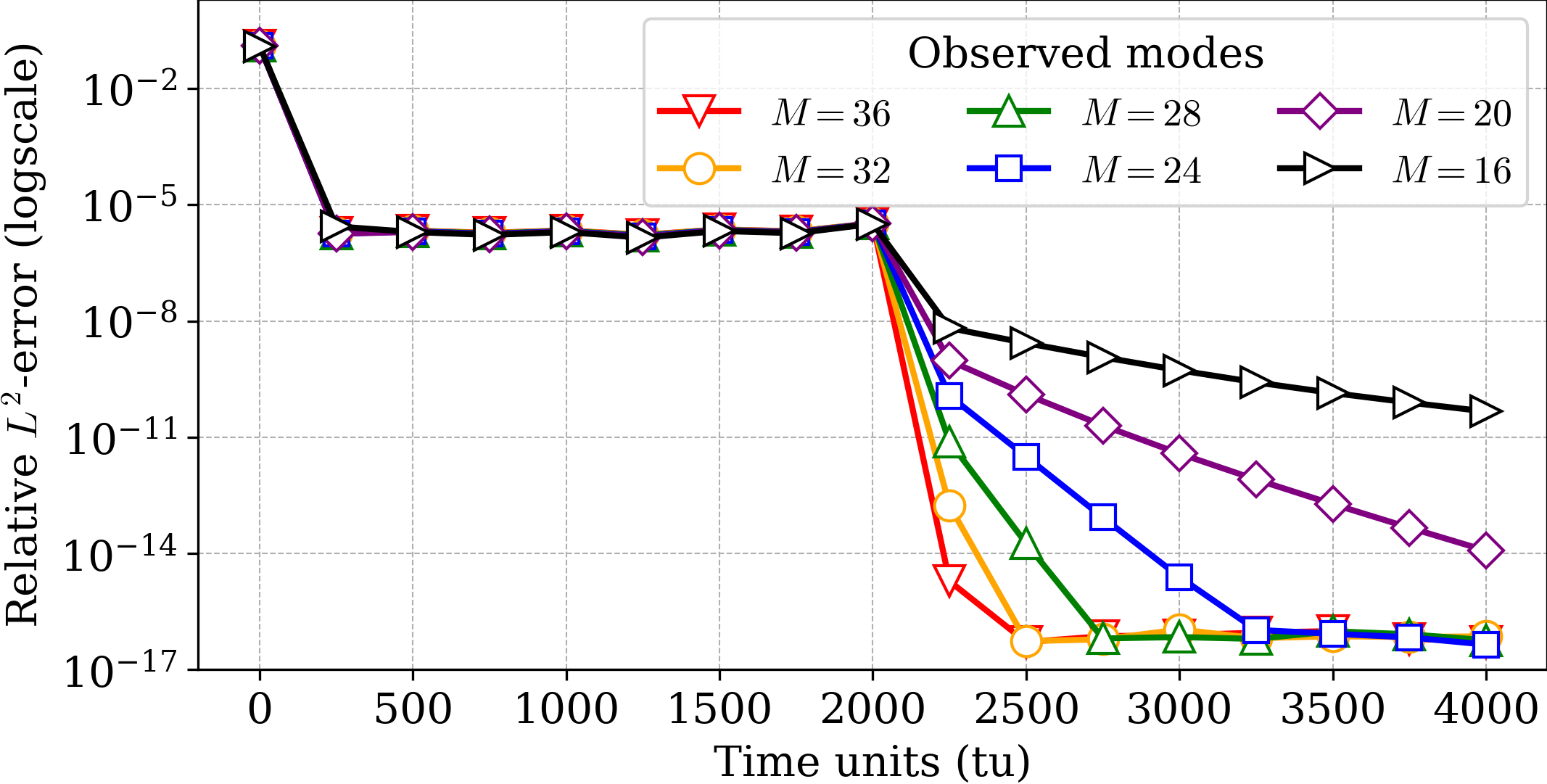}
		\;
		\includegraphics[scale=0.4]{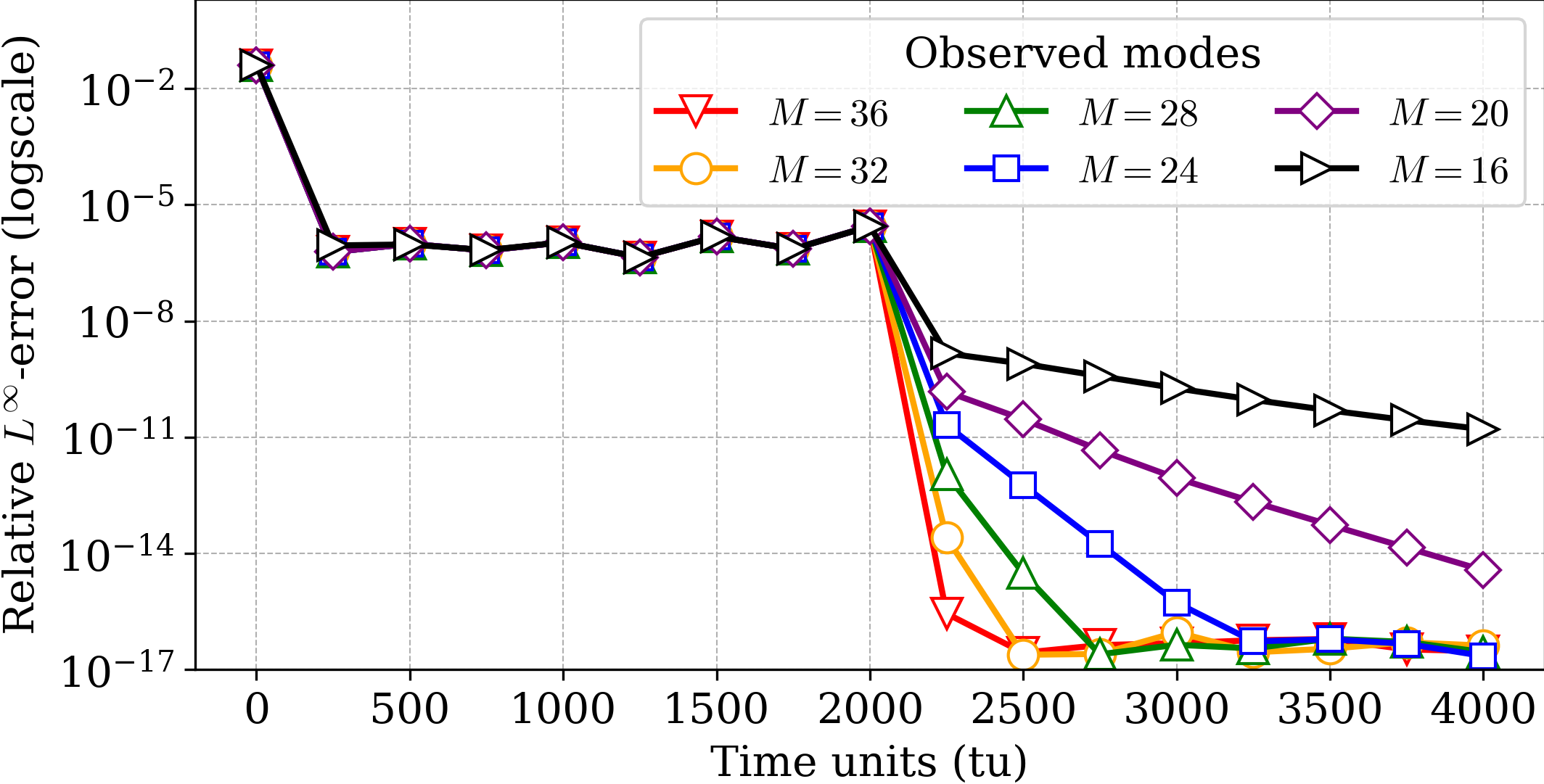}
		
		\subcaption*{\bf Errors between $u$ and $\tu$}
		\includegraphics[scale=0.4]{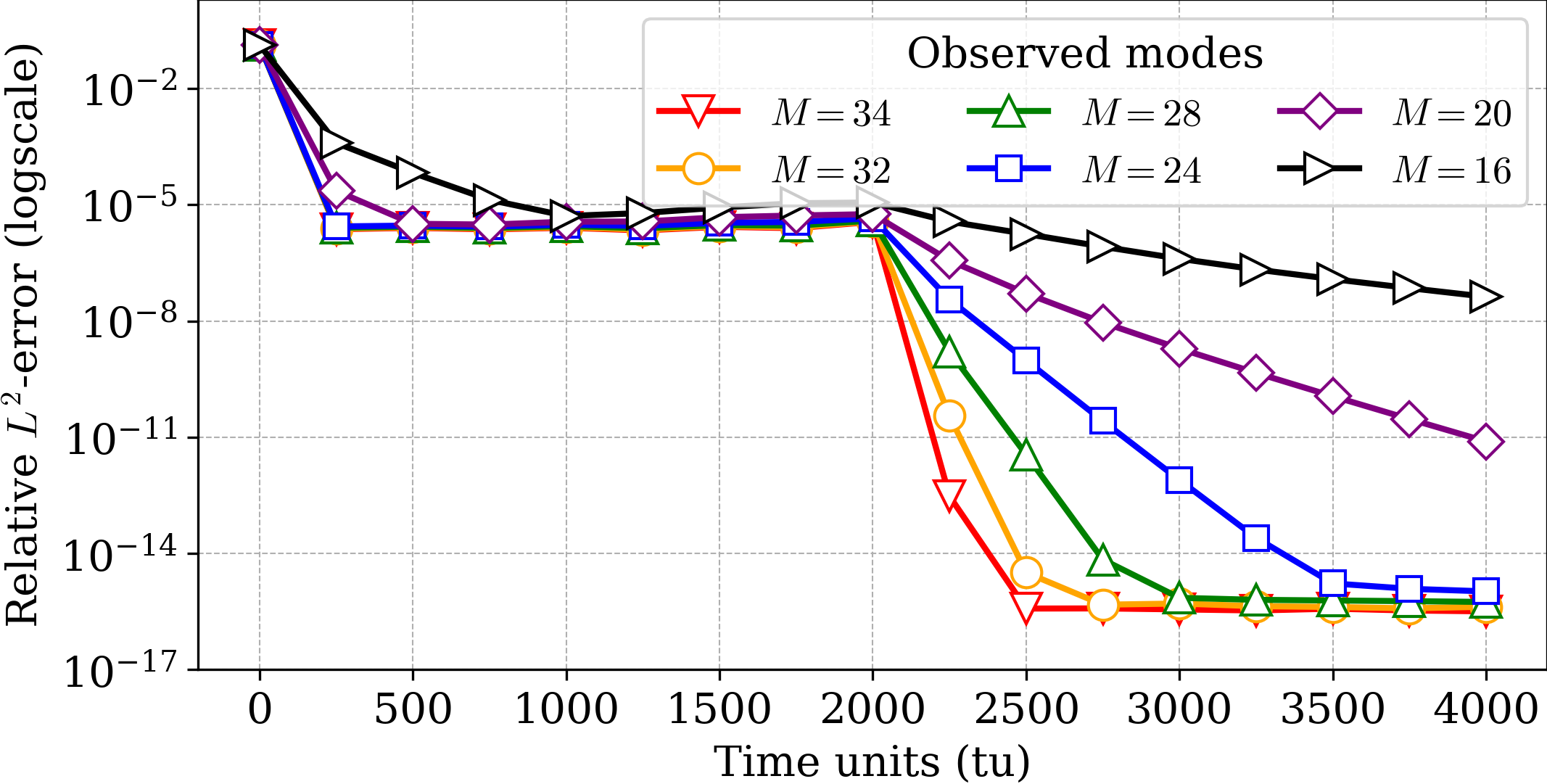}
		\;
		\includegraphics[scale=0.4]{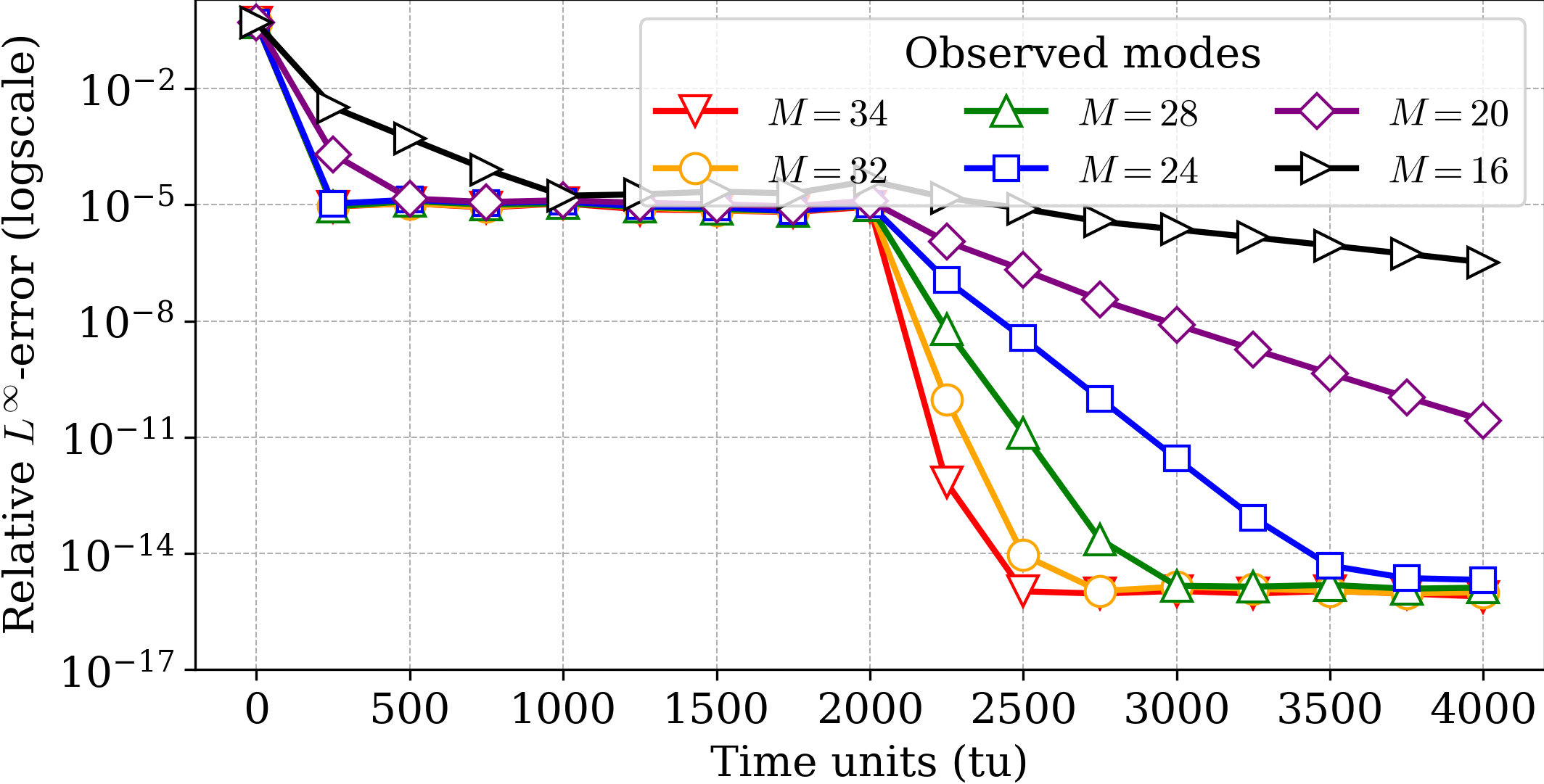}
	}
	
	\begin{subfigure}[t]{1\textwidth}
		
		\centering
		\makebox[.1\textwidth][c]{}
		\hspace{0pt}
		\makebox[.1\textwidth][c]{\small  $500$ tu}
		\hspace{20pt}
		\makebox[.1\textwidth][c]{\small  $1000$ tu}
		\hspace{20pt}
		\makebox[.1\textwidth][c]{\small  $1500$ tu}
		\hspace{20pt}
		\makebox[.1\textwidth][c]{\small  $2000$ tu}
		\hspace{10pt}
		\makebox[.1\textwidth][c]{}
		\hspace{20pt}
		\par\smallskip

		\begin{minipage}[t]{0.03\textwidth}
			\rotatebox{90}{\small{Noise roughness}}
		\end{minipage}
		\begin{minipage}[t]{0.03\textwidth}
			\rotatebox{90}{ $\beta = 0.5$}
		\end{minipage}
		\includegraphics[scale = 1]{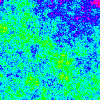}
		\includegraphics[scale = 1]{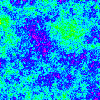}
		\includegraphics[scale = 1]{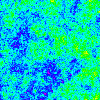}
		\includegraphics[scale = 1]{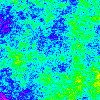}
		\includegraphics[scale = 1]{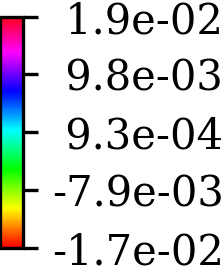}
	\end{subfigure}
	
	\caption{\textbf{Experiment 2.} Evolution of the relative $L^2$-error (left) and the $L^\infty$-error (right) between the reference solution and the assimilated solution. The last row shows snapshots of the filtered Gaussian random field used.}
	\label{fig:experiment2_errors}
\end{figure}

\subsection{Interpretation}

The numerical experiments are consistent with the conclusion of \Cref{thm:lambda_main}, which states that, under some conditions, a finite number of low modes determines the full long-time dynamics of the system. In particular, the numerical experiments suggest that, for the model considered, the first 24-32 modes, depending on the characteristic of the perturbation or feedback, are determining, in the sense of \Cref{defn:determining_modes}, for the dynamics. 

However, the experiments should not be interpreted as a sharp quantitative verification of the bound in \Cref{thm:lambda_main}. \Cref{thm:lambda_main} announces that if 
\begin{equation*}
	\frac{\min\{\lambda_{M},\lambda_{N}\}}{36\lambda_1}
	>  \Gr +   \gamma^2 \bigg[d^2\Big(1 + \frac{d\lambda_1 \sqrt{2}}{c_1c_2}\Big)^4  + \frac{2d^5\lambda_1^3}{c_3^3}(1 + \alpha)^2\bigg]\Gr^4.
\end{equation*}
then the first $M$ modes are determining.
On $D = [0,1]\times[0,1]$, using $\lambda_k\sim 4\pi k$. For the model considered in this experiment, $\Gr\approx 1.5\times 10^{8}$, and the dominant contribution comes from the term $\Gr^4$. This leads to an estimate $M\approx 10^{22}$. The large value comes mainly from $1/d^2$.

The analytical estimate is very conservative and provides only a sufficient condition. In practice, synchronization is observed for much smaller values of $M$, namely $M\approx24$ in the deterministic case and $M\approx 32$ in the rough-noise case. This deviation is typical in determining modes theory, see~\cite{olson2003determining}: rigorous bounds guarantee convergence but often overestimate the number of modes required in computations.

\begin{figure}[t!]
	\centering
	{
		\subcaption*{\bf Errors between $P_Mu$ and $P_M\tu$}
		\includegraphics[scale=0.4]{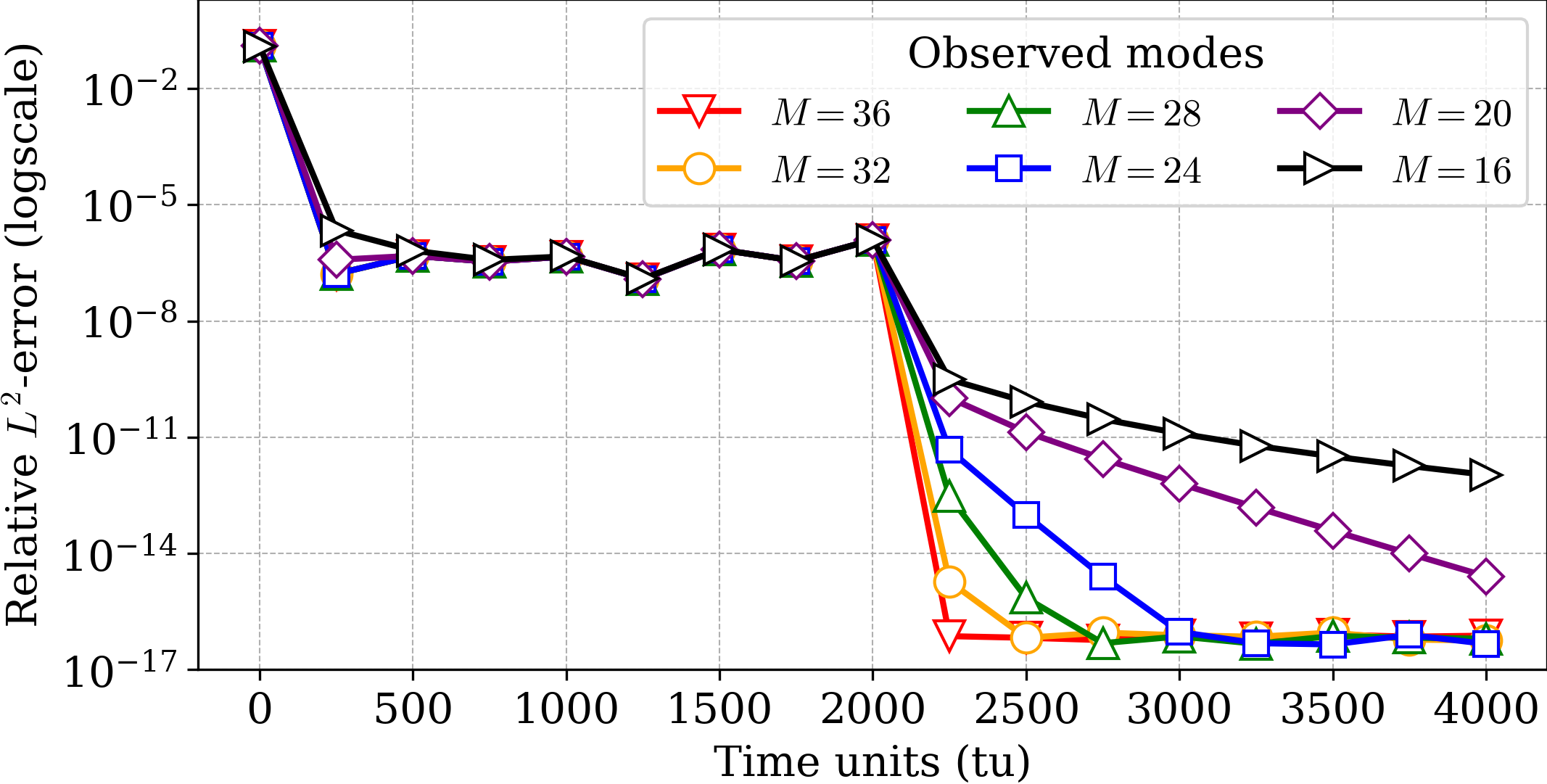}
		\;
		\includegraphics[scale=0.4]{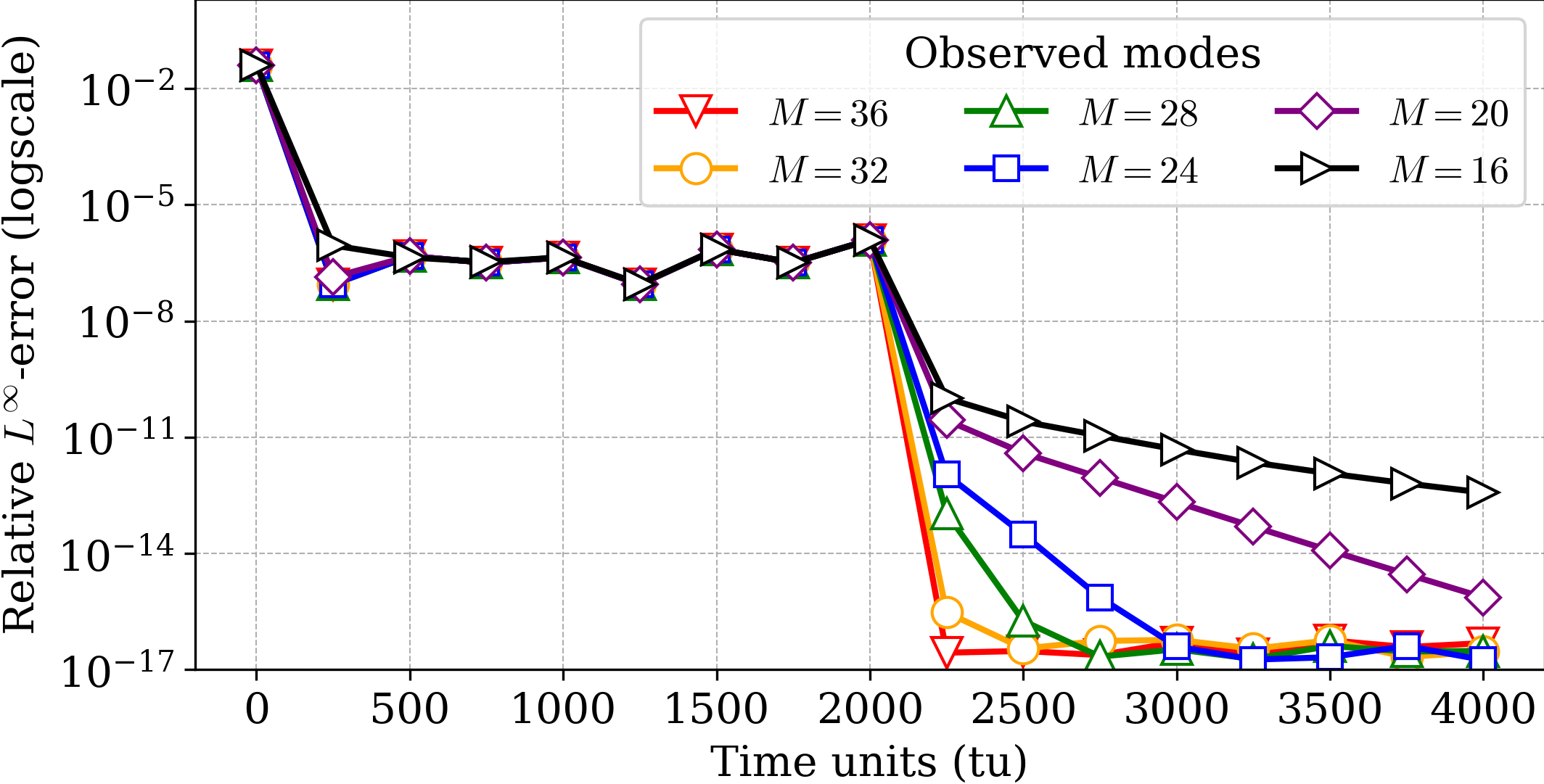}
		
		\subcaption*{\bf Errors between $u$ and $\tu$}
		\includegraphics[scale=0.4]{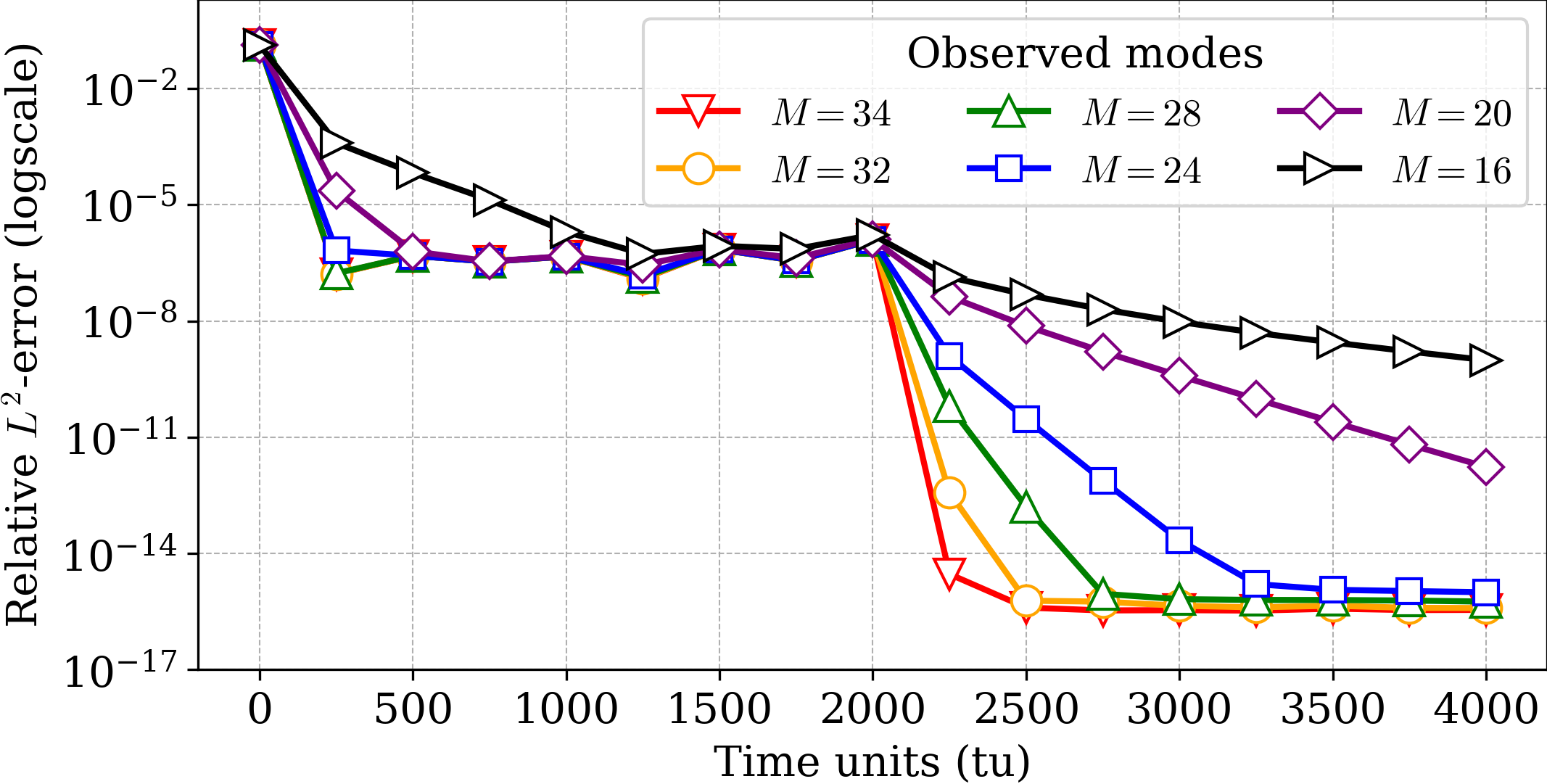}
		\;
		\includegraphics[scale=0.4]{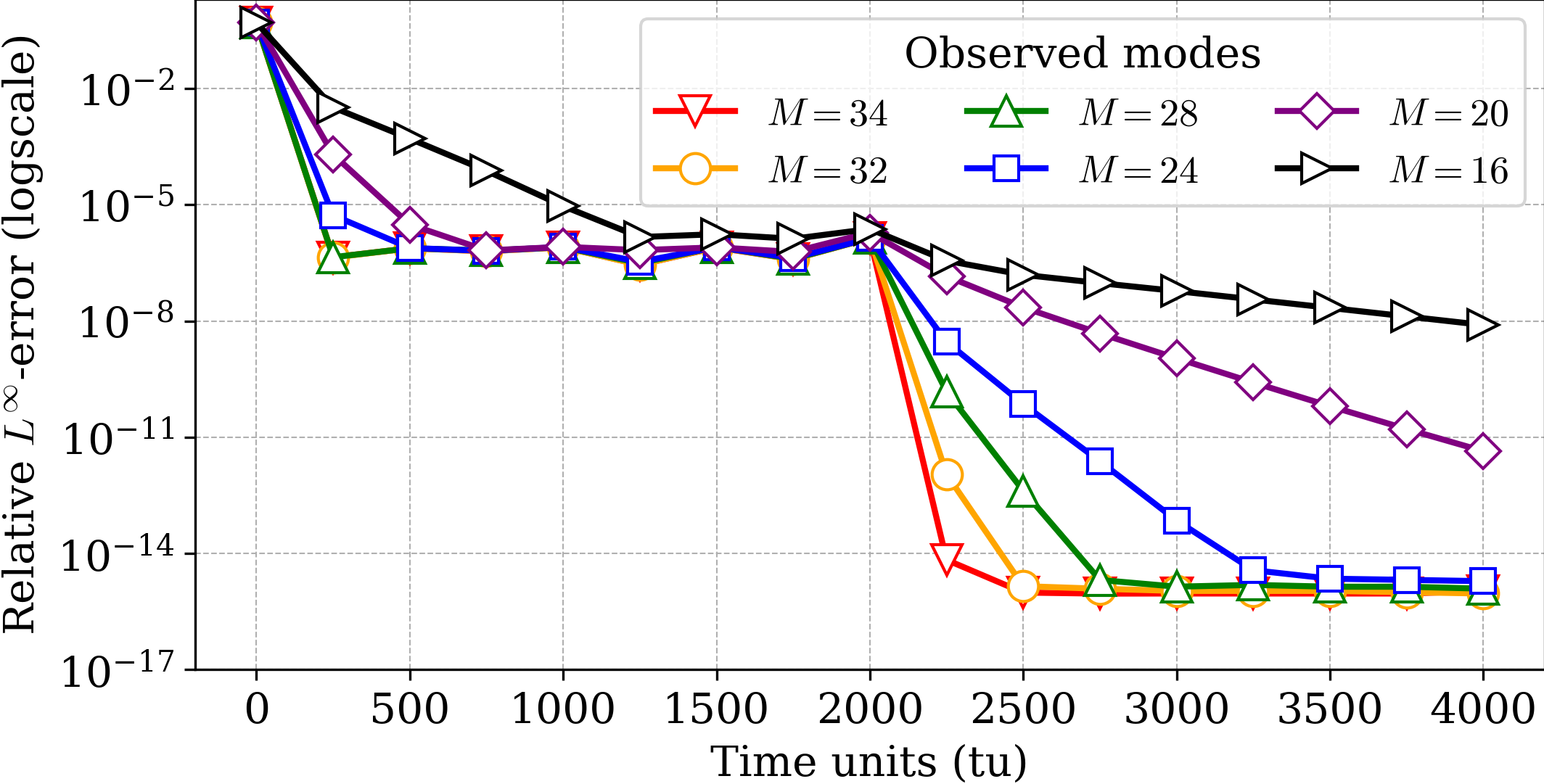}
	}
	
	\begin{subfigure}[t]{1\textwidth}
		
		\centering
		\makebox[.1\textwidth][c]{}
		\hspace{0pt}
		\makebox[.1\textwidth][c]{\small  $500$ tu}
		\hspace{20pt}
		\makebox[.1\textwidth][c]{\small  $1000$ tu}
		\hspace{20pt}
		\makebox[.1\textwidth][c]{\small  $1500$ tu}
		\hspace{20pt}
		\makebox[.1\textwidth][c]{\small  $2000$ tu}
		\hspace{10pt}
		\makebox[.1\textwidth][c]{}
		\hspace{20pt}
		\par\smallskip

		\begin{minipage}[t]{0.03\textwidth}
			\rotatebox{90}{\small{Noise roughness}}
		\end{minipage}
		\begin{minipage}[t]{0.03\textwidth}
			\rotatebox{90}{ $\beta = 1.0$}
		\end{minipage}
		\includegraphics[scale = 1]{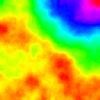}
		\includegraphics[scale = 1]{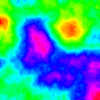}
		\includegraphics[scale = 1]{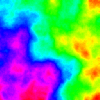}
		\includegraphics[scale = 1]{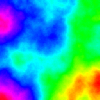}
		\includegraphics[scale = 1]{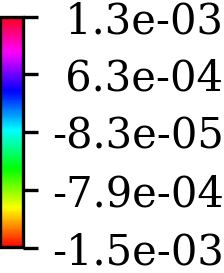}
	\end{subfigure}
	
	\caption{\textbf{Experiment 3.} Evolution of the relative $L^2$-error (left) and the $L^\infty$-error (right) between the reference solution and the perturbed solution. The last row shows snapshots of the filtered Gaussian random field used.}
	\label{fig:experiment3_errors}
\end{figure}
\begin{figure}[t!]
	\centering
	{
		\subcaption*{\bf Errors between $P_Mu$ and $P_M\tu$}
		\includegraphics[scale=0.4]{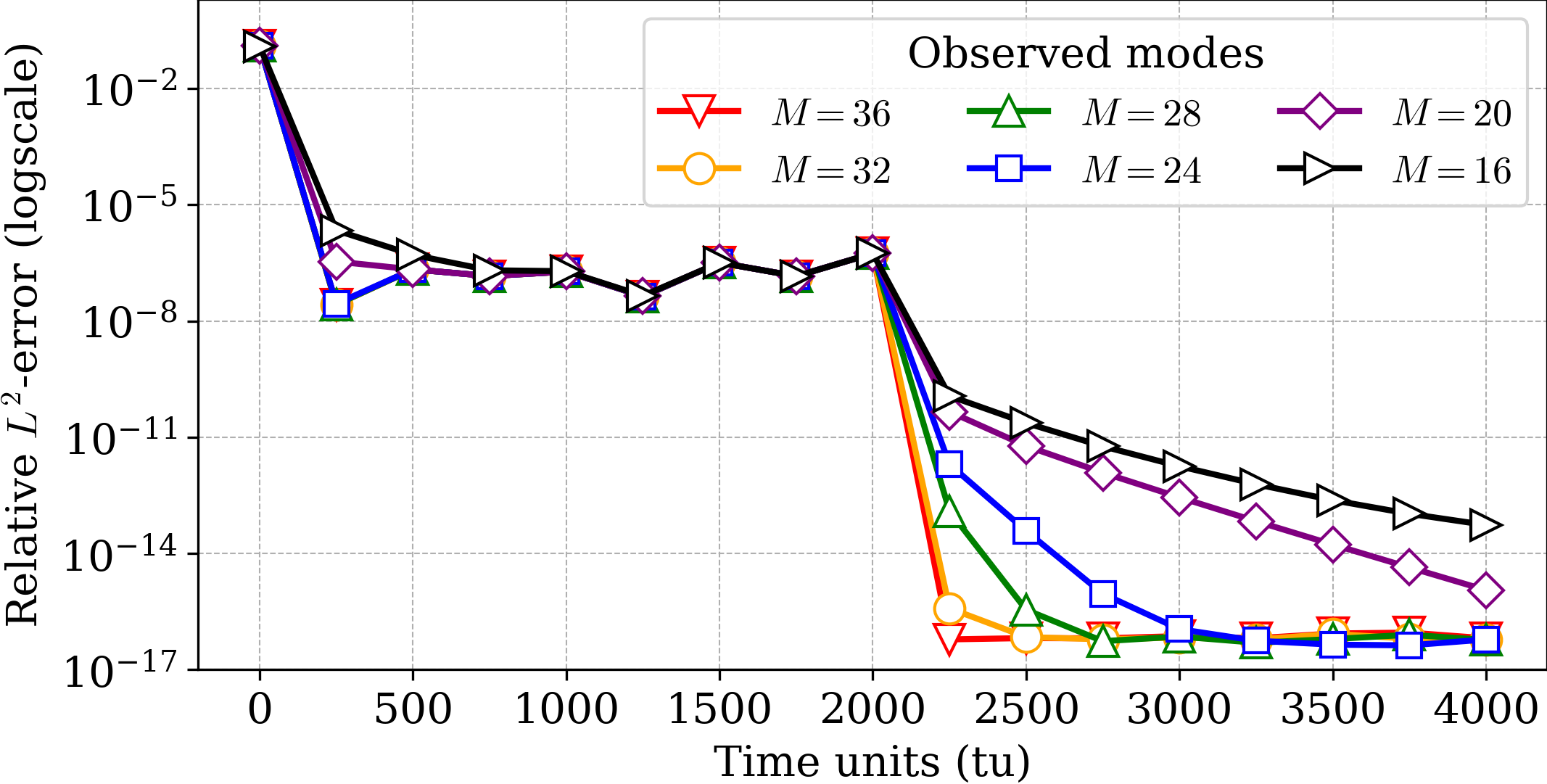}
		\;
		\includegraphics[scale=0.4]{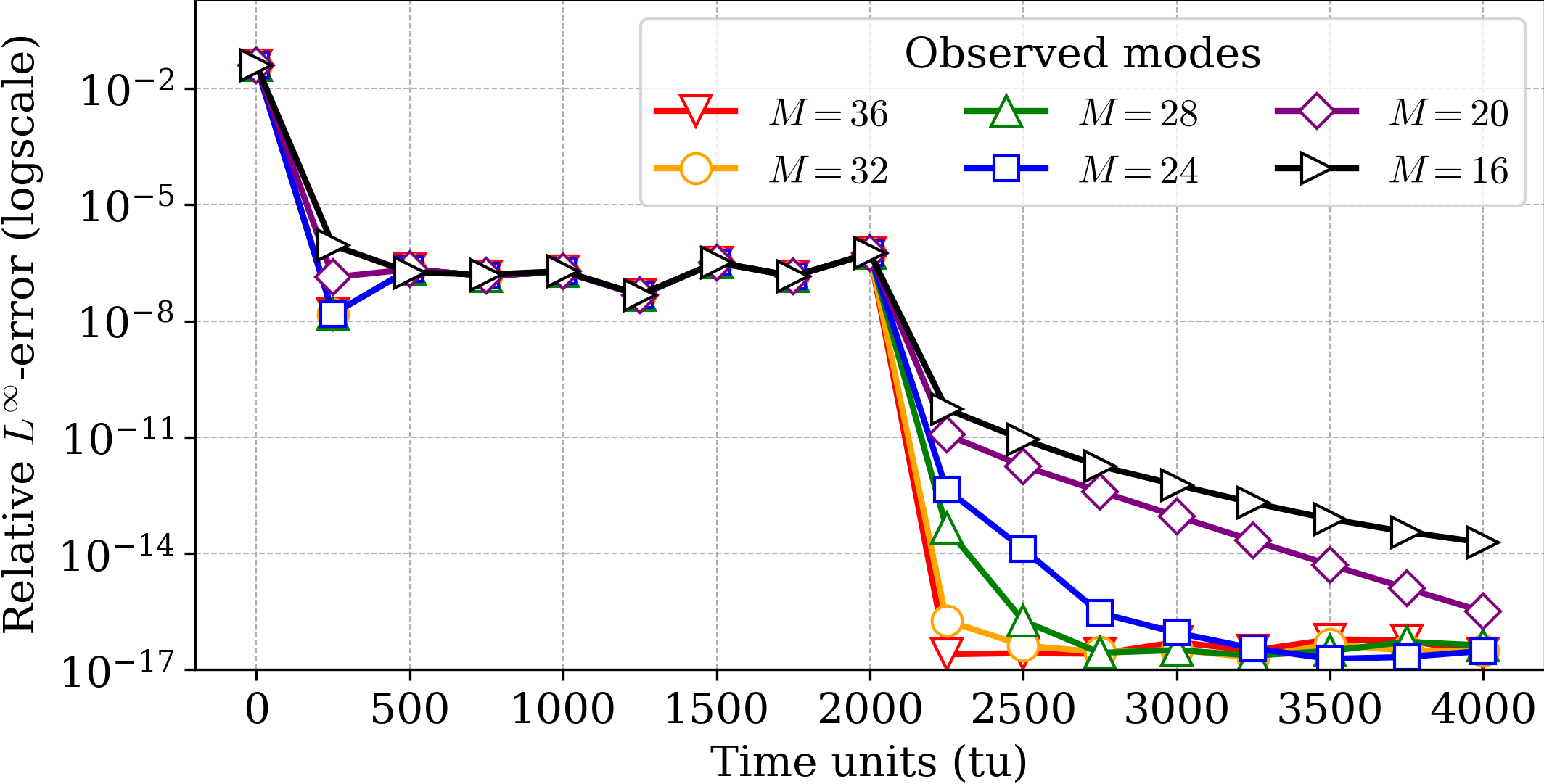}
		
		\subcaption*{\bf Errors between $u$ and $\tu$}
		\includegraphics[scale=0.4]{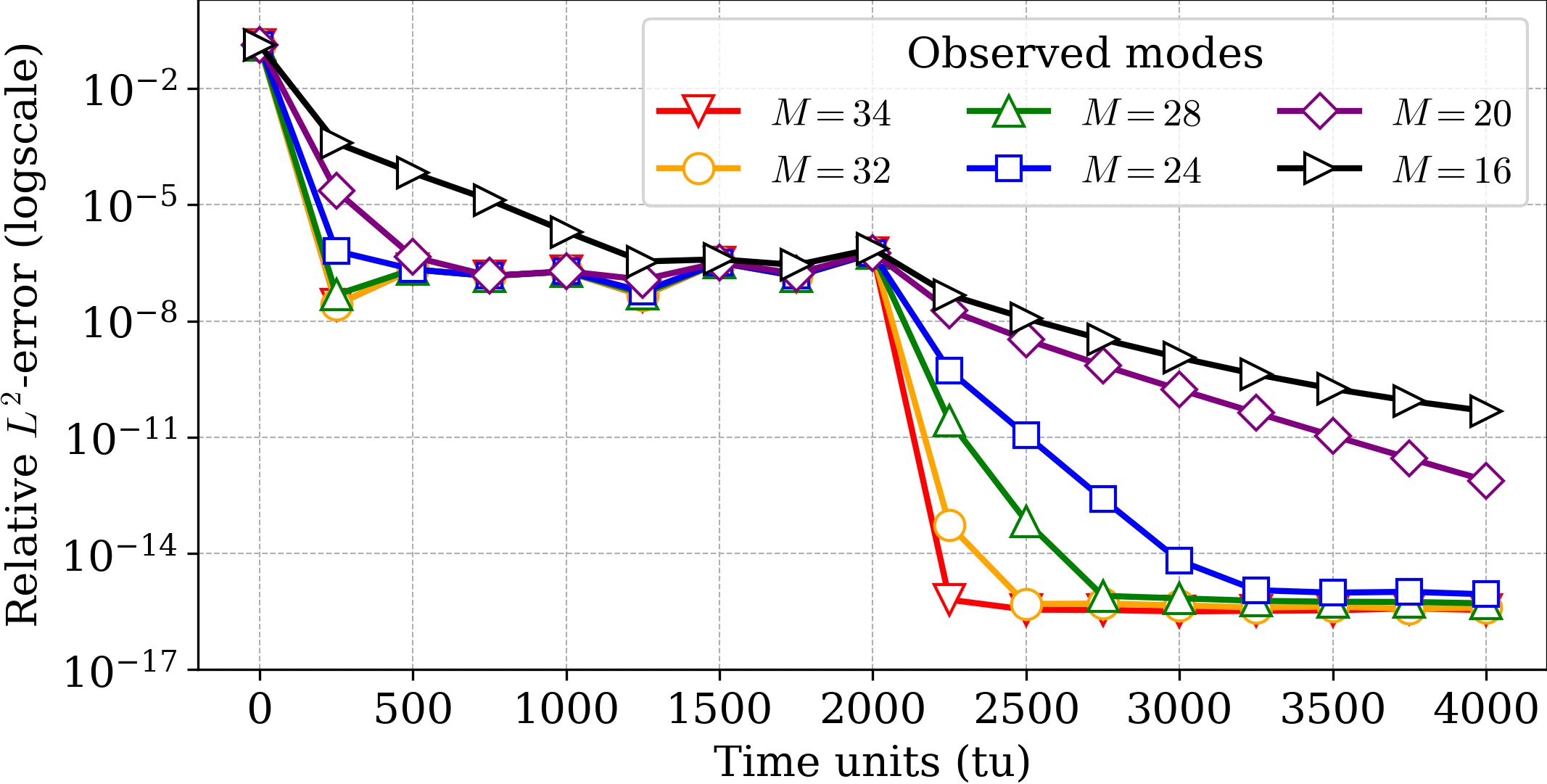}
		\;
		\includegraphics[scale=0.4]{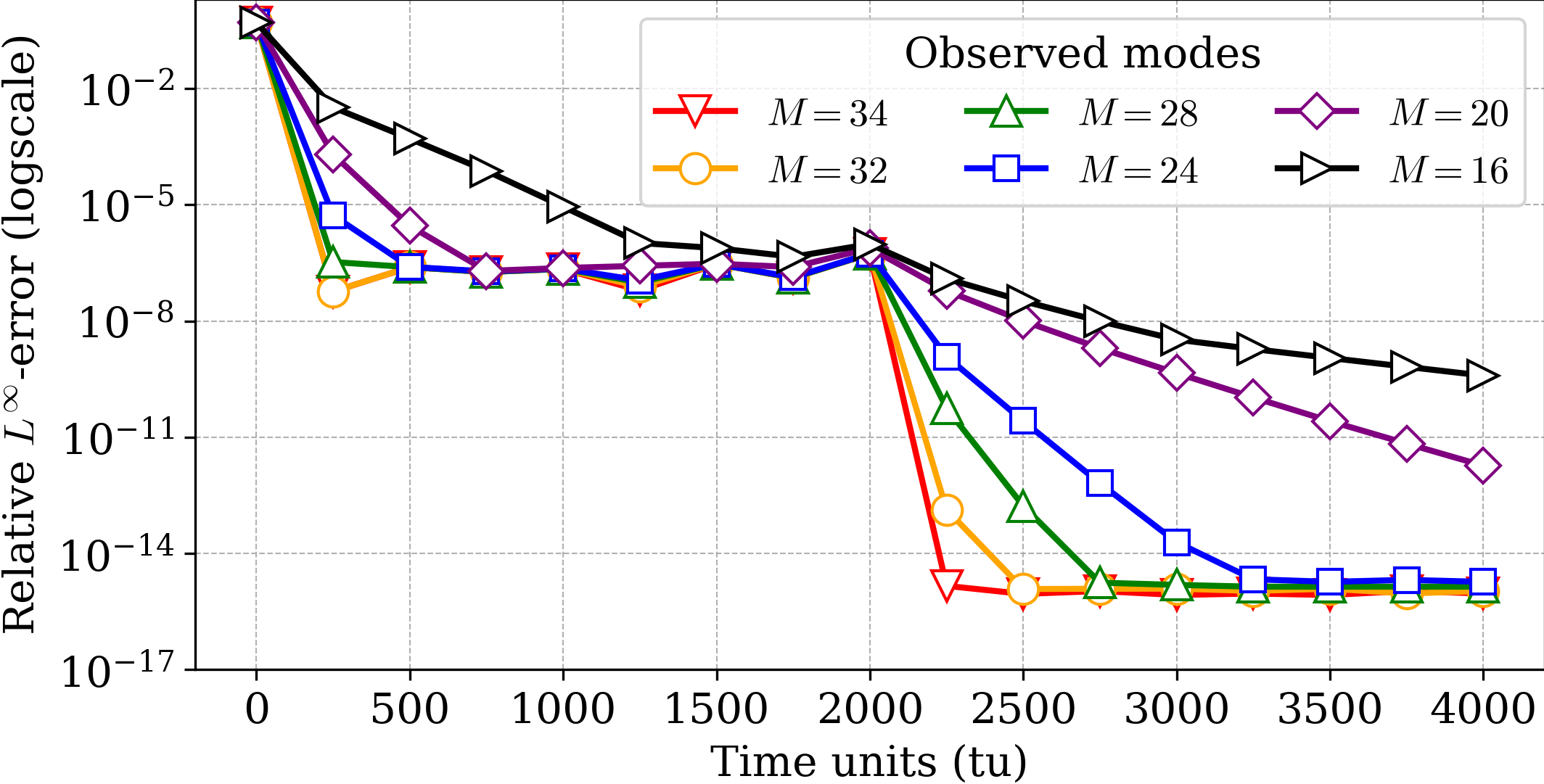}
	}
	
	\begin{subfigure}[t]{1\textwidth}
		
		\centering
		\makebox[.1\textwidth][c]{}
		\hspace{0pt}
		\makebox[.1\textwidth][c]{\small  $500$ tu}
		\hspace{20pt}
		\makebox[.1\textwidth][c]{\small  $1000$ tu}
		\hspace{20pt}
		\makebox[.1\textwidth][c]{\small  $1500$ tu}
		\hspace{20pt}
		\makebox[.1\textwidth][c]{\small  $2000$ tu}
		\hspace{10pt}
		\makebox[.1\textwidth][c]{}
		\hspace{20pt}
		\par\smallskip

		\begin{minipage}[t]{0.03\textwidth}
			\rotatebox{90}{\small{Noise roughness}}
		\end{minipage}
		\begin{minipage}[t]{0.03\textwidth}
			\rotatebox{90}{ $\beta = 1.5$}
		\end{minipage}
		\includegraphics[scale = 1]{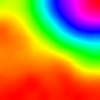}
		\includegraphics[scale = 1]{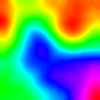}
		\includegraphics[scale = 1]{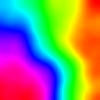}
		\includegraphics[scale = 1]{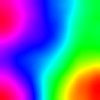}
		\includegraphics[scale = 1]{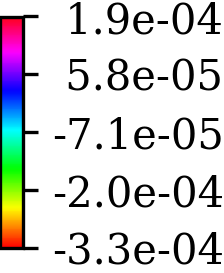}
	\end{subfigure}
	
	\caption{\textbf{Experiment 4.} Evolution of the relative $L^2$-error (left) and the $L^\infty$-error (right) between the reference solution and the perturbed solution. The last row shows snapshots of the filtered Gaussian random field used.} 
	\label{fig:experiment4_errors}
\end{figure}

%
%
\section{Conclusion and discussion}\label{sec:discussion}
In this work, we studied determining modes for a class of two-species reaction-diffusion systems arising in models of pattern formation. Under assumptions \ref{P1}-\ref{P6} and the additional damping condition \eqref{eq:extra_assumption}, we proved in \Cref{thm:lambda_main} that the long-time dynamics are determined by finitely many Fourier modes. The result applies in particular to several classical systems, including the Gray--Scott, and Glycolysis models.

Using a data assimilation algorithm with linear feedbacks for the Gray--Scott model  we illustrate the determining modes condition \Cref{thm:lambda_main} and show synchronization between the reference and assimilated dynamics once a sufficient number of low modes is observed. Although the theoretical estimate obtained in \Cref{thm:lambda_main} is highly conservative, synchronization is observed numerically for a much smaller number of modes. Such large gaps between theoretical and numerical results are typical in determining modes theory, where analytical bounds provide sufficient conditions but generally overestimate the effective number of degrees of freedom needed in computations.

The present work also complements the recent work \cite{randrianasolo2025discrete}, which demonstrates that one can reconstruct the full Gray--Scott solution from coarse, cell-averaged data via the Azouani--Olson--Titi (AOT) data assimilation algorithm, as given by  
\begin{equation}\label{eq:nudgedgs_FV}
	\left\{
	\begin{aligned}
		\partial_t{\tu} &= d_u\Delta \tu  -\tu\tv^2 + F(1-\tu) + \mu_u\big( \mI_H u -\mI_H \tu\big),
		\\
		\partial_t{\tv} &= d_v\Delta \tv  + \tu\tv^2 - (F+k)\tv + \mu_v\big(\mI_H v - \mI_H \tv\big).
	\end{aligned}
	\right.
\end{equation}
Mathematically, the result in~\cite[Theorem~4.5]{randrianasolo2025discrete} demonstrates \textit{synchronization}: the nudged state $(\tilde{u}, \tilde{v})$ converges exponentially in $L^2(D)$ to the true state $(u,v)$, provided that the observation operator $\mI_H$ resolves enough spatial structure, that is the observation resolution $H$ is sufficiently small relative to diffusion and the feedback gain $\mu_u,\mu_v$. In other words, once the low-resolution component, captured by $\mI_H$, of the error decays, the high-frequency components automatically follow due to diffusion and nonlinear coupling. 
This result motivates the question we address here,  
\begin{quote}
	\textit{How many modes of the Gray--Scott system or another model of pattern formation  are sufficient to determine the full dynamics?}
\end{quote}
The present work on determining modes answers this question analytically at the continuous PDE level, without referring to discrete data or to a specific data assimilation algorithm except in the numerical experiments.

The structure of synchronization in the AOT system is, in fact, 
mathematically analogous to that of the determining modes property proved here in \cref{thm:lambda_main}. Indeed, the determining modes result can be expressed as
\[
\limsup_{t\to\infty}  \Big(\Vert (g_1-\tg_1)(t)\Vert_{L^2}+ \Vert (g_2-\tg_2)(t)\Vert_{L^2}\Big) = 0, \mbox{ and } \lim_{t\to\infty}\Big(\Vert P_{M}(u - \tu)(t)\Vert_{L^2} +  \Vert P_{N}(v - \tv)(t)\Vert_{L^2}\Big)= 0
\]
imply
\begin{equation*}
	\lim_{t \to \infty} \big(
	\Vert Q_M (u - \tilde{u}) \Vert_{L^2} + \Vert Q_N (v - \tilde{v}) \Vert_{L^2}
	\big) = 0.
\end{equation*} 

In both settings, the low modes (or coarse observations) act as \textit{determining observables}. The high modes (fine details) are asymptotically determined to those determining observables. The constants and inequalities connecting the feedback parameters $(\mu_u,\mu_v,H)$ in the data assimilation framework correspond analytically to the determining mode inequality
\begin{equation*}
	 \frac{\min\{\lambda_{M},\lambda_{N}\}}{36\lambda_1}
	>  \Gr +   \gamma^2 \bigg[d^2\Big(1 + \frac{d\lambda_1 \sqrt{2}}{c_1c_2}\Big)^4  + \frac{2d^5\lambda_1^3}{c_3^3}(1 + \alpha)^2\bigg]\Gr^4.
\end{equation*}

Hence, the main result \cref{thm:lambda_main} of the present paper provides the analytical foundation for why data assimilation via coarse measurements can succeed: the feedback in the AOT algorithm acts precisely on the determining subspace that governs the full system dynamics.
This correspondence highlights the central idea of the present work, notably that complex reaction-diffusion systems, such as the Gray--Scott, admit a finite-dimensional description capturing their essential \textit{reduced dynamics}.

One limitation of the present result is that the nonlinearities are restricted to polynomial reaction terms satisfying assumptions \ref{P1}-\ref{P6}. In particular, the approach does not directly apply to activator-inhibitor systems with singular kinetics, such as the classical Gierer--Meinhardt model. While the stochastic Gierer--Meinhardt system has been studied in other contexts, see Hausenblas and Panda~\cite{hausenblas2022the}, the singular nonlinear term prevents a direct application of the present determining modes approach. The issue starts from \Cref{lem:form}, where we extract the feedbacks $P_M\xi$ and $P_N\eta$ from the nonlinear terms. Extending determining modes theory to systems with singular kinetics remains an interesting direction for future research.

%
%
\section*{Declarations}
\subsection*{Acknowledgments} 
The authors would like to thank anonymous referees for the careful reading of the manuscript and for the valuable comments and suggestions, which greatly improved the quality and presentation of this work. 

\subsection*{Funding}
This work was supported by the Austrian Science Fund (FWF) \href{https://www.fwf.ac.at/en/research-radar/10.55776/P34681}{10.55776/P44681}. The second author was supported by the Deutsche Forschungsgemeinschaft (DFG, German Research Foundation) -- Project-ID \href{https://gepris.dfg.de/gepris/projekt/317210226}{317210226 - SFB 1283}.

\subsection*{Conflicts of Interest} The authors have no competing interests to declare that are relevant to the content of this article.

\subsection*{Reproducibility} 
The numerical experiments were implemented in Python using, in particular, \texttt{scipy.fft} for the spectral cosine discretization, while the figures were post-processed using Python notebooks.
The code and synthetic data used in this paper are available from the corresponding author upon reasonable request.

%
%
\bibliographystyle{abbrvurl}
\bibliography{determiningModesBitex4}
\end{document}